\newtheorem{lemma}{Lemma}
\newtheorem{assumption}{Assumption}
\newcommand{\eigenvalue}{s}
\title{Robust signal dimension estimation via SURE}
\author[1]{Joni Virta}
\author[1]{Niko Lietz\'en}
\author[1]{Henri Nyberg}
\affil[1]{Department of Mathematics and Statistics, University of Turku, Finland}
\date{}
\begin{document}

\maketitle

\begin{abstract}
The estimation of signal dimension under heavy-tailed latent factor models is studied. As a primary contribution, robust extensions of an earlier estimator based on Gaussian Stein's unbiased risk estimation are proposed. These novel extensions are based on the framework of elliptical distributions and robust scatter matrices. Extensive simulation studies are conducted in order to compare the novel methods with several well-known competitors in both estimation accuracy and computational speed. The novel methods are applied to a financial asset return data set. 
\end{abstract}

\section{Introduction}\label{sec:introduction}

\subsection{Premise}

Modern data sets exhibit increasingly large sizes and often the first step in analysing them is the implementation of suitable dimension reduction procedures, for example, principal component analysis (PCA). A fundamental question pertaining to any such reduction is the choosing of the correct amount of latent components to retain: underestimating their amount leads to losing important information, whereas picking too many components inevitably leads in the later stages of the analysis to modelling of noise and increased computational burden that could have been avoided with a more careful choice of the dimension.

Besides being large in size and volume, in many applications, such as economics and finance, it is common for the data sets to display heavier tails than possessed by the Gaussian distribution. Consider, for example, stock market returns which are often modelled with distributions having infinite variance \cite{borak2011models}. In such cases, the estimation of the latent dimension in dimension reduction is further complicated as one cannot rely any more on the covariance matrix, on whose eigenvalues many of the standard dimension estimation methods rely (a review is given later in this section). 

With the above scenario in our mind, the objective of the current paper is to study and develop dimension estimation in the context of multivariate data sets exhibiting arbitrarily heavy tails. While our treatment is not high-dimensional in the usual sense, we still put a special emphasis on obtaining a method that is computationally scalable also to data with the number of variables in hundreds rather than in tens as such dimensionalities are very common in the applications listed earlier. We will base our theoretical framework on the concepts of ellipitical family and Stein's unbiased risk estimation, elaborated in more detail next.

\subsection{Elliptical latent factor model}

We assume that our data $x_1, \ldots , x_n$ is an i.i.d. sample of $p$-variate vectors generated from the elliptical latent factor model
\begin{align}\label{eq:elliptical_model}
    x_i = \mu + V D z_i,
\end{align}
where $\mu \in \mathbb{R}^p$, $V \in \mathbb{R}^{p \times p}$ is an orthogonal matrix, $z_i$ obeys a spherical distribution \citep{fang2018symmetric}, i.e., $z_i \sim O z_i$ for any orthogonal matrix $O \in \mathbb{R}^{p \times p}$, and $D \in \mathbb{R}^{p \times p}$ is a diagonal matrix with the diagonal elements $\sigma_1 \geq \cdots \geq \sigma_d > \sigma = \cdots = \sigma$ with $\sigma > 0$. Conceptually, the model says that the observed $x_i$ are obtained by mixing the principal components $D z_i$ with the matrix $V$ and by applying a location shift $\mu$. The final $p - d$ principal components in $D z_i$ are orthogonally invariant, meaning that they are essentially ``structureless'' and, as is typical, we view them as \textit{noise}. Thus the main objective in the model \eqref{eq:elliptical_model} is to estimate the latent \textit{signals}, i.e., the first $d$ elements of $D z_i$ along with their number $d$.

The model \eqref{eq:elliptical_model} gets still more intuitive form in the special case where $z_i$ obeys the standard Gaussian distribution, the most well-known example of an elliptical distribution. In this case model \eqref{eq:elliptical_model} reduces to
\begin{align}\label{eq:basic_model}
    x_i = \mu + V_0 y_i + \varepsilon_i,
\end{align}
where the loading matrix $V_0 \in \mathbb{R}^{p \times d}$ contains the $d$ first columns of $V$ as its columns,
\begin{align*}
    y_i \sim \mathcal{N}_d \{ 0, \mathrm{diag}(\sigma_1^2 - \sigma^2, \ldots , \sigma_d^2 - \sigma^2 ) \},
\end{align*}
and $\varepsilon_i \sim \mathcal{N}_p(0 ,\sigma^2 I_p) $. Model \eqref{eq:basic_model} reveals that in the Gaussian case the $d$-dimensional signal residing in the column space of $V_0$ is explicitly corrupted with the noise vectors $\varepsilon_i$. Note that this additive representation does not apply to any other elliptical distribution. The standard method of extracting the factors $z_i$ (or the corresponding subspace) in the Gaussian model \eqref{eq:basic_model} is through PCA. Namely, one computes the first $d$ eigenvectors of the covariance matrix of $x_i$ and projects the observations onto their span. However, the success of this procedure hinges crucially on the knowledge of the latent dimension $d < p$, usually unknown to us in practice. As the misspecification of the dimension has ill consequences in practice (either missing part of the signal or riddling our estimates of the latent factors with noise), an important part of solving the factor model $\eqref{eq:basic_model}$ is the accurate estimation of $d$. We next review a particular method for accomplishing this under the model \eqref{eq:basic_model}, on which our subsequent developments are also based.

\subsection{Stein's unbiased risk estimate}

In \citep{ulfarsson2015selecting}, the latent dimension $d$ in Gaussian PCA was estimated through the application of the \textit{Stein's unbiased risk estimate} (SURE), a general technique for determining the optimal values of tuning parameters (of which the latent dimension $d$ is an example) of estimation procedures.

Ignoring the model \eqref{eq:basic_model} for a moment, we briefly recall the basic idea behind the SURE: Assume that we observe the i.i.d. univariate $w_1, \ldots , w_n$, generated as $w_i = a_i + e_i$, where $a_i \in \mathbb{R}$ are constant and the errors satisfy $e_i \sim \mathcal{N}(0, \tau^2)$ for some $\tau^2 > 0$. Let further to each $a_i$ correspond an estimator $\hat{a}_i(w)$, viewed as a (differentiable) function of the data $w = (w_1, \ldots, w_n)$. Then, the SURE $R$ corresponding to the estimators $a_i$ is defined as
\begin{align}\label{eq:sure_basic}
    R = \frac{1}{n} \sum_{i = 1}^n \{ w_i - \hat{a}_i(w) \}^2 + \frac{2 \tau^2}{n} \sum_{i = 1}^n \frac{\partial}{\partial w_i} \hat{a}_i(w) - \tau^2.
\end{align}
In their celebrated paper \citep{stein1981estimation}, Stein proved that $\mathrm{E}(R) = (1/n) \sum_{i=1}^n \mathrm{E} \{ \hat{a}_i(w) - a_i \}^2$, showing that $R$ is an unbiased estimator of the risk associated with the estimators $\hat{a}_i$, in the complete absence of the true means $a_i$. A multivariate version of SURE was in \citep{ulfarsson2015selecting} adapted to the PCA model \eqref{eq:basic_model} (conditional on the $y_i$) to estimate the expected risk associated with any particular choice of the latent dimension $d$. The estimate of $d$ is then chosen to be the dimensionality for which the risk is minimized. See also the earlier work \cite{ulfarsson2008dimension}.

The obtained estimator was in \citep{ulfarsson2015selecting} shown to be highly successful under the Gaussian model \eqref{eq:basic_model}. However, it is clear that the estimator cannot obtain the same level of efficiency under the wider elliptical model \eqref{eq:elliptical_model}. There are two reasons for this: (i) the SURE-criterion was in \citep{ulfarsson2015selecting} derived strictly under the Gaussian assumption and, more importantly, (ii) many standard elliptical distributions (e.g., multivariate Cauchy distribution) do not have enough finite moments so that the covariance matrix, on which the SURE-criterion is based, would even exist. Hence, the estimator of \citep{ulfarsson2015selecting} is not even necessarily well-defined under the elliptical model \eqref{eq:elliptical_model} (on the population level)!

\subsection{The scope of the current work}\label{subsec:scope}

The primary objective of the current work is to provide a workaround for the previous issue by deriving a robust version of the SURE-criterion that allows for effective dimension estimation under the elliptical model \eqref{eq:elliptical_model} in the presence of heavy-tailed distributions. As described earlier, such procedures are highly called for in applications such as finance, where assumptions on finite moments are usually deemed unreasonable. Our robust extension of the SURE-criterion is carried out via a plug-in strategy where the covariance matrix in the Gaussian SURE-criterion is replaced with a suitable \textit{scatter matrix}. Especially popular in the community of robust statistics, scatter matrices are a class of statistical functionals that measure the dispersion/scatter/variation in multivariate data while (usually) being far more robust to the impact of heavy tails and outliers than the covariance matrix, see Section \ref{sec:example_functionals} for their definition and several examples. We consider three different plug-in estimators, depending in which form of the Gaussian SURE-criterion the scatter matrix is plugged in. The first two options lead to analytically simple estimators that depend on the data only through the eigenvalues of the used scatter matrix, much like the classical estimators of dimension (see below). Whereas, the third strategy is more elaborate and involves computing particular derivatives of the scatter functional (and the companion location functional).

As our secondary objective, we conduct an extensive simulation study where the proposed methods are compared to each other and to several (families of) competing estimators from the literature. These competitors include: (i) The classical estimator based on successive asymptotic hypothesis testing for the equality of the final eigenvalues of a chosen scatter matrix (testing for \textit{subshpericity}), see \citep{schott2006high,nordhausen2021asymptotic}. (ii) Variation of the previous estimator where the null distributions are bootstrapped instead of relying on asymptotic approximations \citep{nordhausen2021asymptotic}. (iii) The general-purpose procedure for inferring the rank of a matrix from its sample estimate known as the \textit{ladle}, which we apply to select scatter matrices, see \citep{luo2016combining}. (iv) The SURE-estimator of \citep{ulfarsson2015selecting} which can be seen as the non-robust version of our proposed estimator. We are not aware of comparisons of similar magnitude being conducted earlier in the literature. 

To summarize the results of our simulation study (given in Section \ref{sec:simulations}), they reveal that the SURE-based robust methodology for the determination of the latent dimension is: (i) \textit{Accurate}, achieving comparable or superior estimation results to its competitors in various data scenarios. (ii) \textit{Flexible}, that is, it allows the free selection of the used robust scatter matrix. This is in strict contrast to its closest competitor, the asymptotic hypothesis test mentioned above, which is (for theoretical reasons) ``locked'' to operate with a specific, slow-to-compute scatter matrix. (3.) \textit{Fast}, requiring no bootstrap replicates or any kind of resampling. Due to these three properties, we find the method an especially attractive tool for data-rich large-scale applications.


\subsection{Organization of the manuscript}

The manuscript is organized as follows. In Section \ref{sec:sure_pca} we recall the Gaussian SURE-criterion of \cite{ulfarsson2015selecting} for estimating the latent dimension. In Sections \ref{sec:example_functionals} and \ref{sec:derivative_extension} we propose three different robust extensions of the criterion through the use of different pairs of location and scatter functionals. Sections \ref{sec:simulations} and \ref{sec:real_data} contain the simulation study and an empirical (financial) example on asset returns, respectively. In Section \ref{sec:discussion} we conclude with some future research ideas. The proofs of all technical results are collected in Appendix~\ref{sec:proofs}.

\section{SURE criterion for Gaussian PCA}\label{sec:sure_pca}

In this section, we recall how the SURE-criterion can be used to estimate the latent dimension $d$ under the Gaussian model \eqref{eq:basic_model}. Our derivation of the criterion differs from the original version \citep{ulfarsson2015selecting} in that we employ empirical centering of the data, whereas \citep{ulfarsson2015selecting} did not. We made this change to the method as it is unreasonable to assume that the true location of the data is known in practice. As a consequence, our formula for Gaussian SURE in Lemma \ref{lem:sure_pca} differs non-trivially from the one given in \citep{ulfarsson2015selecting}.

Due to the empirical centering, we assume, without loss of generality, that $\mu = 0$ throughout the following. As in \citep{ulfarsson2015selecting}, we use Stein's Lemma to construct an unbiased estimator of the risk associated with estimating the signals $V_0 y_i$ by their reconstructions $\hat{x}_i$ based on the first $k$ principal components. Assuming that $k = 1, \ldots , p$ is fixed from now on and letting $U_k \in \mathbb{R}^{p \times k}$ denote a matrix of (any) first $k$ orthogonal eigenvectors of the covariance matrix $S_0 := (1/n) \sum_{i=1}^n (x_i - \bar{x})(x_i - \bar{x})'$, the reconstructions can be written as $\hat{x}_i \equiv \hat{x}_{i}(k) = t_0 + P_k (x_i - t_0)$ where $P_k := U_k U_k'$ is the orthogonal projection onto the space spanned by the first $k$ eigenvectors of $S_0$ and $t_0 := (1/n) \sum_{i=1}^n x_i$ is the mean vector. For convenience, we replicate an intermediate result towards the final Gaussian SURE-criterion below as Lemma~\ref{lem:sure_pca}. In the lemma the reconstructions $\hat{x}_i$ are treated as functions of the original data $x_1, \ldots , x_n$ and the result implicitly assumes the former to be differentiable in the latter, sufficient conditions for which will be discussed later in Section~\ref{sec:example_functionals}. In Lemma \ref{lem:sure_pca}, and throughout the paper, $\| \cdot \|$ denotes the Euclidean norm.

\begin{lemma}\label{lem:sure_pca}
Under model \eqref{eq:basic_model}, the quantity
\begin{align*}
    R_{1k} := \mathrm{tr} \{ ( I_p - P_k ) S_0 \} + \frac{2 \sigma^2}{n} \sum_{i = 1}^n \sum_{j = 1}^p  \frac{\partial}{\partial x_{ij}} \hat{x}_{ij} - p \sigma^2
\end{align*}
is an unbiased estimator of the risk $(1/n) \sum_{i = 1}^n \mathrm{E} \| \hat{x}_i - V_0 y_i \|^2$.
\end{lemma}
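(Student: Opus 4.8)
The plan is to recognize $R_{1k}$ as an instance of the multivariate, multi-sample version of the Gaussian SURE identity \eqref{eq:sure_basic} applied coordinatewise, and then to simplify its leading term into the stated trace expression. First I would condition on the latent factors $y = (y_1, \ldots, y_n)$. Since $\mu = 0$, under model \eqref{eq:basic_model} this renders the signals $a_i := V_0 y_i$ fixed and makes the entries $x_{ij} = a_{ij} + \varepsilon_{ij}$ mutually independent with $\varepsilon_{ij} \sim \mathcal{N}(0, \sigma^2)$ across both $i$ and $j$. This is exactly the setting in which Stein's lemma applies in its coordinatewise form: for any sufficiently regular $g$ one has $\mathrm{E} \{ (x_{ij} - a_{ij}) g(x) \} = \sigma^2 \mathrm{E} \{ \partial g / \partial x_{ij} \}$, the derivative being taken in the single coordinate $x_{ij}$ while $g$ may depend on the whole data matrix $x$.

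Next I would reproduce the standard SURE expansion coordinate by coordinate. Writing $\hat{x}_{ij} - a_{ij} = (\hat{x}_{ij} - x_{ij}) + (x_{ij} - a_{ij})$, squaring, and taking the conditional expectation, the cross term is handled by Stein's lemma with the choice $g = \hat{x}_{ij} - x_{ij}$, contributing $\sigma^2 \mathrm{E} ( \partial \hat{x}_{ij}/\partial x_{ij} - 1 )$, while the pure-noise term contributes $\mathrm{E}(x_{ij} - a_{ij})^2 = \sigma^2$. Summing over the $np$ pairs $(i,j)$ and dividing by $n$ yields
\begin{align*}
\frac{1}{n} \sum_{i=1}^n \mathrm{E} \| \hat{x}_i - V_0 y_i \|^2 = \frac{1}{n} \sum_{i=1}^n \mathrm{E} \| \hat{x}_i - x_i \|^2 + \frac{2 \sigma^2}{n} \sum_{i=1}^n \sum_{j=1}^p \mathrm{E} \Big( \frac{\partial}{\partial x_{ij}} \hat{x}_{ij} \Big) - p \sigma^2,
\end{align*}
which already matches $R_{1k}$ in all but the leading term. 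It then remains to rewrite that residual term. Using $\hat{x}_i = t_0 + P_k(x_i - t_0)$ gives $\hat{x}_i - x_i = -(I_p - P_k)(x_i - t_0)$, and since $I_p - P_k$ is a symmetric idempotent matrix, $\| \hat{x}_i - x_i \|^2 = (x_i - t_0)'(I_p - P_k)(x_i - t_0) = \mathrm{tr}\{ (I_p - P_k)(x_i - t_0)(x_i - t_0)' \}$. Averaging over $i$ and recalling $t_0 = \bar{x}$ turns $(1/n)\sum_i (x_i - t_0)(x_i - t_0)'$ into $S_0$, so that $(1/n)\sum_i \| \hat{x}_i - x_i \|^2 = \mathrm{tr}\{ (I_p - P_k) S_0 \}$. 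Substituting this identity and removing the conditioning on $y$ by the tower property completes the argument.

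I expect the delicate point to be not the algebra above but the justification that Stein's lemma may be invoked at all: the reconstruction $\hat{x}_{ij}$ depends on the entire data matrix both through the empirical mean $t_0$ and, more seriously, through the projection $P_k$ built from the eigenvectors of $S_0$, so it is a genuinely complicated function of every $x_{ij}$. Establishing its (weak) differentiability together with the integrability required for the Stein identity is precisely the regularity that the statement presupposes and that is deferred to Section~\ref{sec:example_functionals}; here it is taken as given. Because this is only an intermediate result, the divergence term $\sum_{i,j} \partial \hat{x}_{ij}/\partial x_{ij}$ is deliberately left unevaluated, so no explicit differentiation of $P_k$ is needed at this stage, and the proof reduces to the bookkeeping of the multi-sample Stein identity and the simplification of the residual term.
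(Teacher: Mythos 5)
Your proposal is correct and follows essentially the same route as the paper's proof: the same decomposition $\hat{x}_i - V_0 y_i = (\hat{x}_i - x_i) + \varepsilon_i$, the same application of Stein's lemma conditionally on $y_1, \ldots, y_n$ (applied to $g = \hat{x}_{ij} - x_{ij}$ rather than to $\hat{x}_{ij}$ itself, a trivial difference), and the same rewriting of $(1/n)\sum_i \|\hat{x}_i - x_i\|^2$ as $\mathrm{tr}\{(I_p - P_k)S_0\}$ via idempotence of $I_p - P_k$. You also correctly identify that the differentiability of $\hat{x}_{ij}$ through $t_0$ and $P_k$ is the presupposed regularity deferred elsewhere in the paper.
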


The two $k$-dependent terms of $R_{1k}$ in Lemma \ref{lem:sure_pca} have natural interpretations: The term $\mathrm{tr} \{ ( I_p - P_k ) S_0 \}$ measures the total variation of the data in directions orthogonal to the first $k$ eigenvectors and takes large values when the used number of eigenvectors is insufficient to capture the full $d$-variate latent signal. The quantity $(1/n) \sum_{i = 1}^n \sum_{j = 1}^p  ( \partial / \partial x_{ij} ) \hat{x}_{ij} $ measures the average influence an observation has on their own reconstruction and is often interpreted as the generalized degrees of freedom of the model, where large values indicate overfitting to the data set, see \citep{tibshirani2012degrees} (in the extreme case with $d = p$ we actually have $\hat{x}_{ij} = x_{ij}$). Thus, $R_{1k}$ can be seen to be similar in form to Akaike's information criterion (AIC) (and other related information criteria), whose two terms also measure model fit and model complexity, respectively.

To apply the criterion $R_{1k}$ in practice, we require an expression for the partial derivatives in Lemma \ref{lem:sure_pca}. As is shown later in the context of Lemma \ref{lem:reconstruction_derivative} in Section \ref{sec:derivative_extension}, the partial derivatives exist under the assumption that the eigenvalues of $S_0$ are simple (which holds almost surely under the model \eqref{eq:basic_model}), and have the forms presented next. See \cite{ulfarsson2008dimension, ulfarsson2015selecting} for similar results.

\begin{lemma}
Under model \eqref{eq:basic_model}, the quantity
\begin{align*}
    R_{2k} := \mathrm{tr} \{ ( I_p - P_k ) S_0 \} + \frac{2 \sigma^2}{n} \sum_{j = 1}^k \sum_{\ell = k + 1}^p \frac{\eigenvalue_j + \eigenvalue_\ell}{\eigenvalue_j - \eigenvalue_\ell}
    + \frac{\sigma^2}{n} \{ 2 p + 2 ( n - 1 ) k - n p \} ,
\end{align*}
where $\eigenvalue_1 > \cdots > \eigenvalue_p$ are the eigenvalues of $S_0$, is an unbiased estimator of the risk $(1/n) \sum_{i = 1}^n \mathrm{E} \| \hat{x}_i - V_0 y_i \|^2$.
\end{lemma}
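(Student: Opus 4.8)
The plan is to establish the \emph{deterministic} identity $R_{2k} = R_{1k}$, after which the unbiasedness claim transfers immediately from the previous lemma. Comparing the two expressions, this reduces to evaluating the degrees-of-freedom term $\sum_{i=1}^n \sum_{j=1}^p (\partial/\partial x_{ij}) \hat{x}_{ij}$ in closed form and verifying that the leftover constants assemble into $\tfrac{\sigma^2}{n}\{2p + 2(n-1)k - np\}$. I would start from $\hat{x}_i = t_0 + P_k(x_i - t_0)$ and differentiate, splitting each partial derivative according to whether it acts on the location $t_0$ and the explicit factor $x_i - t_0$, or on the spectral projector $P_k$, which depends on the data through $S_0$.

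The explicit part is routine. Using $\partial t_{0m}/\partial x_{ij} = \delta_{jm}/n$ and $\partial x_{im}/\partial x_{ij} = \delta_{jm}$, the location and identity contributions to the double sum collapse to $p + (n-1)\,\mathrm{tr}(P_k) = p + (n-1)k$, since $\mathrm{tr}(P_k) = k$. Feeding only this piece into $R_{1k}$ already reproduces the constant $\tfrac{\sigma^2}{n}\{2p + 2(n-1)k - np\}$ (from $\tfrac{2\sigma^2}{n}\{p + (n-1)k\} - p\sigma^2$), so it remains to show that the projector contribution equals exactly the cross-term sum $\sum_{j=1}^k \sum_{\ell=k+1}^p (s_j + s_\ell)/(s_j - s_\ell)$.

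This projector term is the main obstacle. Here I would invoke first-order eigenvalue perturbation theory, legitimate because the eigenvalues of $S_0$ are simple almost surely (to be justified via Lemma~\ref{lem:reconstruction_derivative}): writing $S_0 = \sum_\ell s_\ell u_\ell u_\ell'$,
\[
  \frac{\partial P_k}{\partial x_{ij}} = \sum_{a \le k}\sum_{b > k} \frac{u_b'\,(\partial S_0/\partial x_{ij})\,u_a}{s_a - s_b}\,(u_b u_a' + u_a u_b'),
\]
where the terms coupling two top-block (or two bottom-block) eigenvectors cancel pairwise by the antisymmetry of $(s_a - s_b)^{-1}$, reflecting the rotational invariance of $P_k$ within each block. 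I would then insert the elementary identity $\partial S_0/\partial x_{ij} = n^{-1}(e_j r_i' + r_i e_j')$, with $r_i := x_i - t_0$, and carry out the index contraction against $r_{im}$. Summing over $m$ turns the two bracketed factors into the same expression, yielding a perfect square; summing over $j$ then collapses it via the orthonormality relations $\sum_j u_{aj} u_{bj} = \delta_{ab}$ to $(r_i'u_a)^2 + (r_i'u_b)^2$; and summing over $i$ invokes the eigen-relation $\sum_{i=1}^n (r_i'u_a)^2 = u_a'(n S_0)u_a = n s_a$. The factors of $n$ cancel and one is left precisely with $\sum_{a \le k}\sum_{b > k} (s_a + s_b)/(s_a - s_b)$.

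Finally I would assemble the pieces: the explicit contribution gives the stated constant after multiplication by $2\sigma^2/n$ and subtraction of $p\sigma^2$, while the projector contribution supplies the double sum, so that $R_{1k} = R_{2k}$ and unbiasedness follows. The only subtlety warranting care is the justification of the differentiability of $P_k$ and the validity of the perturbation expansion, which I would defer to the simple-eigenvalue hypothesis; the remaining manipulations are bookkeeping on orthonormal bases.
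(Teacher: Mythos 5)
Your proposal is correct and follows essentially the same route as the paper: the paper obtains this lemma by specializing its general derivative formula (Lemma~\ref{lem:reconstruction_derivative}, derived via the same first-order perturbation expansion of the spectral projector) with the mean--covariance expressions $h_{ij}=e_j/n$ and $H_{ij}=n^{-1}(e_j r_i' + r_i e_j')$ of Lemma~\ref{lem:mean_covariance}, which is exactly the computation you carry out inline. Your bookkeeping checks out: the explicit part gives $p+(n-1)k$ and hence the stated constant, and the orthonormality contractions together with $\sum_i (r_i'u_a)^2 = n\eigenvalue_a$ yield precisely the double sum.
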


To apply the criterion $R_{2k}$ in practice, an estimator for the unknown error variance $\sigma^2$ is needed and several feasible alternatives exist. For example, \citep{luo2021order} used, in a similar context, the median of the final $\lfloor p/2 \rfloor$ eigenvalues of $S_0$. The resulting estimator is accurate but makes the implicit assumption that $d \geq \lceil p/2 \rceil$. To avoid such difficult-to-verify conditions, we prefer to instead use the final eigenvalue $\eigenvalue_p$ of $S_0$ as the estimator of the noise variance, imposing minimal assumptions on the latent dimensionality (i.e., that $d < p$). Naturally, the price to pay is that $\eigenvalue_p$ suffers from underestimation in finite samples. Note that, to combat the underestimation, \citep{ulfarsson2008dimension} proposed an alternative estimator of $\sigma^2$ based on the limiting spectral distribution of the covariance matrix under high-dimensional Gaussian data. Mimicking this strategy is not viable in our scenario as any results on the limiting spectral distributions of the scatter matrices used in Section \ref{sec:example_functionals} are still scarce in the literature.

Plugging in the estimator $\eigenvalue_p$ and observing that $\mathrm{tr} \{ ( I_p - P_k ) S_0 \} = \sum_{\ell = k + 1}^p \eigenvalue_\ell$ now leads to two different sample forms for the SURE criterion for Gaussian PCA:

\begin{align}\label{eq:two_forms_for_sure}
\begin{split}
    \hat{R}_{1k} :=& \sum_{\ell = k + 1}^p \eigenvalue_\ell + \frac{2 \eigenvalue_p}{n} \sum_{i = 1}^n \sum_{j = 1}^p  \frac{\partial}{\partial x_{ij}} \hat{x}_{ij} - p \eigenvalue_p,\\
    \hat{R}_{2k} :=& \sum_{\ell = k + 1}^p \eigenvalue_\ell + \frac{2 \eigenvalue_p}{n} \sum_{j = 1}^k \sum_{\ell = k + 1}^p \frac{\eigenvalue_j + \eigenvalue_\ell}{\eigenvalue_j - \eigenvalue_\ell}
    + \frac{\eigenvalue_p}{n} \{ 2 p + 2 ( n - 1 ) k - n p \}.
\end{split}
\end{align}
The ``hat'' notation for $\hat{R}_{1k}, \hat{R}_{2k}$ signifies the fact that they have been obtained from $R_{1k}, R_{2k}$ by replacing the unknown $\sigma^2$ with its estimator $\eigenvalue_p$. In the following sections we obtain outlier-resistant alternatives to both $\hat{R}_{1k}$ and $\hat{R}_{2k}$ via plugging in robust measures of location and scatter in place of the mean vector and covariance matrix in \eqref{eq:two_forms_for_sure}. In addition, we will also consider an ``asymptotic'' version of the criterion $\hat{R}_{2k}$,
\begin{align}\label{eq:asymptotic_sure}
    \hat{R}_{3k} := \sum_{\ell = k + 1}^p \eigenvalue_\ell + \eigenvalue_p (2 k - p),
\end{align}
where the terms of the order $o_p(1)$ (in the asymptotic regime where $n \rightarrow \infty$) have been removed. Note that even though we might have $\eigenvalue_j - \eigenvalue_\ell \rightarrow_p 0$ for some indices $j, \ell$, the limiting distribution of $\sqrt{n}(\eigenvalue_j - \eigenvalue_\ell) $ for such indices is absolutely continuous (with respect to the Lebesgue measure) \citep{anderson1963asymptotic}, meaning that the impact of the double sum in $\hat{R}_{2k}$ can be expected to be negligible for large $n$.

\section{Robust plug-in SURE criteria}\label{sec:example_functionals}

Plug-in-techniques are a typical way to create outlier-resistant versions of standard multivariate methods in the community of robust statistics, see, for example, \citep{croux2000principal,nordhausen2015cautionary}. In this spirit, we replace the mean vector $t_0$ and the covariance matrix $S_0$ in the SURE criteria \eqref{eq:two_forms_for_sure}, \eqref{eq:asymptotic_sure} with a pair $(t, S)$ of \textit{location and scatter functionals} \citep{oja2010multivariate}, the definitions of which we recall next. Letting $F$ be an arbitrary $p$-variate distribution, a location functional (location vector) $t$ is a map $F \mapsto t(F) \in \mathbb{R}^p$ such that, for any invertible $A \in \mathbb{R}^{p \times p}$ and $b \in \mathbb{R}^p$, we have $t(F_{A, b}) = A t(F) + b $ where $F_{A, b}$ is the distribution of the random vector $A x + b$ and $x \sim F$. Similarly, a scatter functional (scatter matrix) $S$ is a map taking values in the space of positive semi-definite matrices and obeying, for any invertible $A \in \mathbb{R}^{p \times p}$ and $b \in \mathbb{R}^p$, the transformation rule $S(F_{A, b}) = A S(F) A'$. These transformation properties are typically referred to as \textit{affine equivariance}.

Location and scatter functionals mimic the properties of the mean vector and the covariance matrix and typically measure some aspects of the center and spread of a distribution, respectively. In particular, if $F$ is the elliptical model \eqref{eq:elliptical_model}, then $t(F) = \mu$ and $S(F) = \tau_{S, F} V D^2 V'$ for all location and scatter functionals $(t, S)$ for which $t(F)$ and $S(F)$ exist, where the scalar $\tau_{S, F} > 0$ depends on both the exact distribution of the spherical $z_i$ and on the used scatter functional, see \citep[Theorem 3.1]{oja2010multivariate}. Hence, all choices of $(t, S)$ estimate, up to scale, the same quantities under the elliptical model, implying that the replacing of the mean vector and the covariance matrix in SURE with the pair $(t, S)$ is warranted (at least in the Gaussian special case \eqref{eq:basic_model} of the elliptical model, under which the SURE criteria in Section \ref{sec:sure_pca} were derived). Note that this equivalence of different $(t, S)$-pairs does not necessarily mean that the dimension estimates given by different choices of $(t, S)$ should always be equal. Namely, the equivalence indeed holds under the population level model \eqref{eq:elliptical_model}, but in practical situations the accuracy of the estimates is greatly influenced by the finite-sample properties (in particular, robustness properties) of the used location and scatter functionals. This is clearly evident in the simulation results of Section \ref{sec:simulations}. 

Examples of popular location and scatter functionals are given later in this section and we assume, for now, that we have selected some robust location-scatter pair $(t, S)$. Outlier-resistant versions of the forms $\hat{R}_{2k}$ and $\hat{R}_{3k}$ of the Gaussian SURE criterion in \eqref{eq:two_forms_for_sure} and \eqref{eq:asymptotic_sure} are then straightforwardly obtained. Namely, we simply replace the eigenvalues $\eigenvalue_j$ of the covariance matrix $S_0$ with the eigenvalues of the scatter functional $S$ in the definitions. Note that while the location functional $t$ does not play an explicit role in this construction, it is usually a part of the definition of $S$, see for example the spatial median and the spatial sign covariance matrix later in this section.

As an alternative to the above, rather simplistic plug-in estimators, we consider also a more elaborate extension based on the form $\hat{R}_{1k}$ of SURE in \eqref{eq:two_forms_for_sure} where, in addition to $S_0$, we also replace the partial derivatives $( \partial / \partial x_{ij} ) \hat{x}_{ij}$ with their counterparts based on the robust pair $(t, S)$. That is, the robust version of $\hat{R}_{1k}$ uses the reconstruction estimates $ \hat{x}_i = t + P_k (x_i - t) $ where the centering is done with the robust location functional $t$ (instead of $t_0$) and the projection matrix $P_k$ is now taken to be onto the space spanned by the first $k$ eigenvectors of the robust scatter functional $S$ (instead of $S_0$). 
Due to its more technical nature in comparison to the other two criteria, we have postponed the discussion of the extension of $\hat{R}_{1k}$ to Section \ref{sec:derivative_extension}.

Finally, regardless of which of the three criteria $\hat{R}_{1k}$, $\hat{R}_{2k}$ and $\hat{R}_{3k}$ one uses, the corresponding estimate $\hat{d}$ of the latent dimension $d$ is obtained as the minimizing index,
\begin{align*}
    \hat{d} = \mathrm{argmin}_{k = 0, \ldots, p - 1} \hat{R}_{jk}.
\end{align*}
We next recall several popular options for the location-scatter pair $(t, S)$.

\subsection{Mean vector and covariance matrix}

The most typical choice for the pair $(t, S)$ is the mean vector and the covariance matrix, i.e.,
\begin{align}\label{eq:spatial_median_objective}
    t(F) = \mathrm{E}_F(x), \quad S(F) = \mathrm{E}_F [ \{x - t(F)\} \{x - t(F)\}' ],
\end{align}
where $\mathrm{E}_F$ means that the expectation is taken under the assumption that $x \sim F$. This choice simply leads to the Gaussian SURE-criterion discussed in Section \ref{sec:sure_pca}. As discussed before, this option is, despite often being the optimal choice under the assumption of normality, also highly non-tolerant against outliers and heavy tails.

\subsection{Spatial median and spatial sign covariance matrix}

The spatial median $t(F)$ of a distribution $F$ is defined as any minimizer of the convex function
\begin{align*}
    t \mapsto E_F \{ \| x - t \| - \| x \| \},
\end{align*}
over $t \in \mathbb{R}^p$. The spatial median is one of the oldest and most studied robust measures of multivariate location, see, \citep{haldane1948note, brown1983statistical}, and reverts to the univariate concept of median when $p = 1$. It can be shown to exist for any $F$ (in particular, no moment conditions are required) and it is unique as soon as $F$ is not concentrated on a line in $\mathbb{R}^p$ \citep{milasevic1987uniqueness} which is guaranteed, in particular, almost surely when $F$ is absolutely continuous.

The standard scatter functional counterpart for the spatial median is the spatial sign covariance matrix (SSCM), defined as,
\begin{align*}
    S(F) = \mathrm{E}_F \left\{  u( x - t(F) ) u( x - t(F) )' \right\},
\end{align*}
where $t(F)$ is the spatial median of $F$, which is assumed to be unique, and the sign function $u: \mathbb{R}^p \to \mathbb{R}^p $ is defined as $u(x) = x/\|x\|$ for $x \neq 0$ and $u(0) = 0$. Like its location counterpart, also the SSCM has been extensively studied in the literature, see, for instance, \citep{marden1999some, visuri2000sign, durre2016eigenvalues}. 

The defining feature of SSCM is that it depends on the data only through the ``signs'' $u(x - t(F))$, giving equal weight to points in a given direction regardless of their norm (which, in turn, is what makes the SSCM robust to outliers). Especially for high-dimensional data, this loss of information introduced by the discarding of the observation magnitudes is relatively small as it represents losing only a single degree of freedom in the $p$-dimensional space (whereas the sign contains the remaining $p - 1$ degrees of freedom).

We also note that the spatial median and the spatial sign covariance matrix are, strictly speaking, not a pair of location and scatter functionals in the usual sense as they satisfy the equivariance properties listed in Section \ref{sec:sure_pca} only when the matrix $A$ is orthogonal. However, this is not an issue in our scenario for the following reasons: (i) The spatial median is a consistent estimator of the location parameter in the elliptical model \eqref{eq:elliptical_model} under minor regularity conditions, see, e.g., \citep{magyar2011asymptotic}. (ii) Under the elliptical model \eqref{eq:elliptical_model}, two eigenvalues $\eigenvalue_j, \eigenvalue_\ell$ of the SSCM are equal if and only if the corresponding elements $\sigma_j, \sigma_\ell$ of the matrix $D$ are equal, see \citep{durre2016eigenvalues}. Hence, the eigenvalues of the spatial sign covariance matrix contain the same (qualitative) information about the latent signal dimension as those of any ``proper'', affine equivariant scatter functional. However, the spatial sign covariance matrix is also known to compress the range of the eigenvalues \citep{vogel2015robust}, making it more difficult to distinguish between the signal and the noise and, thus, we next consider an alternative to it. This alternative is known as Tyler's shape matrix and is often seen as the affine equivariant version of the SSCM.

\subsection{Tyler's shape matrix}

Tyler's shape matrix \citep{tyler1987distribution} is one of the earliest proposed and most studied scatter functionals, see, e.g., \citep{dumbgen2005breakdown, wiesel2012geodesic}. Using it requires a location functional $t$ and, in the following, we take this to be the spatial median, as is common in the literature. Tyler's shape matrix $S(F)$ is defined as any $S$ with $\mathrm{det}(S) = 1$ and satisfying the following fixed-point equation,
\begin{align}\label{eq:tyler_shape}
    \mathrm{E}_F [ u(S^{-1/2} \{x - t(F)\} ) u(S^{-1/2} \{x - t(F)\} )' ] = \frac{1}{p} I_p.
\end{align}
A unique solution $S(F)$ is obtained as soon as $F$ does not concentrate too heavily on a subspace in $\mathbb{R}^p$, see \citep{dumbgen2005breakdown} for the exact conditions. Inspection of the equation \eqref{eq:tyler_shape} also reveals that any solution $S$ to it is defined only up to its scale and, to obtain a unique representative, a popular choice is indeed to use the determinant condition $\mathrm{det}(S) = 1$ to fix the scale of solution, see \citep{paindaveine2008canonical}. Consequently, $S(F)$ does not describe the full scatter of $F$ but only its \textit{shape} (scale-standardized scatter). However, this is sufficient for our purposes as scaling preserves the ordering of the eigenvalues of $S(F)$ and, hence, their division into signal and noise. Note that this also means that Tyler's shape matrix satisfies the affine equivariance property discussed in the beginning of Section \ref{sec:example_functionals} only up to scale, $S(F_{A, b}) = \{ \mathrm{det}(A) \}^{-2/p} A S(F) A'$.

The computation of $S(F)$ can be shown to correspond to a geodesically convex minimization problem \citep{wiesel2012geodesic}, meaning that an efficient algorithm for its estimation in practice is straightforwardly constructed. In our simulations we have used the R-package \texttt{ICSNP} \citep{Ricsnp} for this purpose.

\subsection{Hettmansperger-Randles estimator} 

As our final choice for the location scatter pair $(t, S)$, we consider the so-called Hettmansperger-Randles (H-R) estimator, which was originally introduced in the context of robust location estimation (and the associated shape functional was obtained as a ``by-product'' of the location estimation) \citep{hettmansperger2002practical}. The H-R pair $(t(F), S(F))$ is defined as any $(t, S)$, with $\mathrm{det}(S) = 1$ and satisfying the following pair of fixed-point equations,
\begin{align}\label{eq:hr_part_2}
    \mathrm{E}_F \{ u(S^{-1/2} (x - t) ) \} = 0 \quad \quad \mbox{and} \quad \quad \mathrm{E}_F \{ u(S^{-1/2} (x - t) ) u(S^{-1/2} (x - t) )' \} = \frac{1}{p} I_p.
\end{align}
Observing that the LHS of the first equation in \eqref{eq:hr_part_2} is, disregarding the matrix $S^{-1/2}$, the gradient of the objective function \eqref{eq:spatial_median_objective} of the spatial median, we see that the H-R pair $(t(F), S(F))$ can be interpreted as simultaneously determined spatial median and Tyler's shape matrix. This concurrent estimation of location and scatter (or, rather, shape as again any solution $S$ to the fixed-point equations is unique at most up to scale) then makes the resulting estimator affine equivariant (up to scale in case of~$S$). 

Despite its attractiveness, the theoretical properties of the H-R estimator have garnered less attention in the literature when compared to its previously introduced alternatives. In particular, we are not aware of any studies investigating conditions that would guarantee the uniqueness of the solution~$(t, S)$.

To summarize, the three robust alternatives to the mean-covariance pair introduced in this section can all be seen to estimate analogous quantities, while at the same time forming a sort of ``hierarchy'' with respect to their equivariance properties: (i) the spatial median and SSCM satisfy affine equivariance only for orthogonal $A$, (ii) replacing SSCM with Tyler's shape matrix yields the full affine equivariance property for the scatter (shape) functional and, (iii) both the location and scatter (shape) components of the H-R estimate are affine equivariant. As affine equivariance is the natural transformation property for a scatter functional to have in the presence of the elliptical model \eqref{eq:elliptical_model}, we thus expect that the previous ordering applies also to the corresponding SURE-procedures' comparative performances in practice. This claim will be investigated through simulations in Section \ref{sec:simulations}.

\section{Robust extension of $\hat{R}_{1k}$}\label{sec:derivative_extension}

In this section, we explore extending the SURE-criterion $\hat{R}_{1k}$ in \eqref{eq:two_forms_for_sure} to accommodate an arbitrary location-scatter pair. The theoretical cost of such an extension is considerably larger than for $\hat{R}_{2k}$ and $\hat{R}_{3k}$ as, instead of simply plugging in eigenvalues, it involves computing the partial derivatives $(\partial/\partial x_{ij}) \hat{x}_{ij}$.

In the sequel, let the observed sample $x_1, \ldots , x_n$ of points in $\mathbb{R}^p$ be fixed and denote its empirical distribution by $F_n$. Moreover, for $\varepsilon > 0$ we let $F_{n, i, j, \varepsilon}$ denote the empirical distribution of the perturbed sample $x_1, \ldots, x_i + \varepsilon e_j, \ldots , x_n $ where $e_j$ is the $j$th vector in the canonical basis of $\mathbb{R}^p$. For the extension $\hat{R}_{1k}$ to be well-defined in the first place, $t$ and $S$ are naturally required to be differentiable in a suitable sense, and the next assumption formalizes this requirement.

\begin{assumption}\label{assu:derivatives}
For any $i = 1, \ldots, n$ and $j = 1, \ldots, p $, there exists $h_{ij} \in \mathbb{R}^p$ and a symmetric $H_{ij} \in \mathbb{R}^{p \times p}$ satisfying
\begin{align*}
     \frac{1}{\varepsilon} \{ t( F_{n, i, j, \varepsilon} ) - t( F_n ) \} \rightarrow h_{ij} \quad \quad \mbox{and} \quad \quad \frac{1}{\varepsilon} \{ S( F_{n, i, j, \varepsilon} ) - S( F_n ) \} \rightarrow H_{ij},
\end{align*}
as $\varepsilon \rightarrow 0$.
\end{assumption}

In order for also the projection matrix $P_k$ (onto the span of the first $k$ eigenvectors of $S(F_n)$) to be differentiable in the previous sense for all $k = 1, \ldots , p$, all eigenvalues of the matrix $S(F_n)$ must be simple. This condition, formalized in Assumption \ref{assu:eigenvalues} below, is rather mild and, in particular, holds almost surely for both the covariance matrix and the SSCM if the points $x_1, \ldots , x_n$ are drawn from an absolutely continuous distribution. 

\begin{assumption}\label{assu:eigenvalues}
 The eigenvalues of $S(F_n)$ are distinct.
\end{assumption}

Under the previous two assumptions, the partial derivatives $( \partial / \partial x_{ij} ) \hat{x}_{ij}$ exist and their sum over $j$ has the analytical form given in the next lemma.

\begin{lemma}\label{lem:reconstruction_derivative}
Under Assumptions \ref{assu:derivatives} and \ref{assu:eigenvalues}, we have
\begin{align*}
    \sum_{j=1}^p \frac{\partial}{\partial x_{ij}} \hat{x}_{ij} &= k + \sum_{j=1}^p \mathrm{tr}\{ (I_p - P_k) h_{ij} e_j' \} + \sum_{j=1}^p e_j' A_{ij} \{ x_i - t(F_n) \},
\end{align*}
where 
\begin{align*}
    A_{ij} := \sum_{\ell=1}^k \sum_{m = k + 1}^p \frac{1}{\eigenvalue_\ell - \eigenvalue_{m}} (T_\ell H_{ij} T_m + T_m H_{ij} T_\ell),
\end{align*}
$T_\ell$ is the orthogonal projection onto the space spanned by the $\ell$th eigenvector of $S(F_n)$, and $\eigenvalue_\ell$ is the corresponding eigenvalue.
\end{lemma}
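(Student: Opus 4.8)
The plan is to differentiate the reconstruction $\hat{x}_i = t(F_n) + P_k\{x_i - t(F_n)\}$ entrywise, interpreting the partial derivative $\partial / \partial x_{ij}$ as the directional derivative induced by the perturbation $F_{n,i,j,\varepsilon}$. Under this identification, Assumption~\ref{assu:derivatives} directly supplies $\partial t(F_n)/\partial x_{ij} = h_{ij}$ and $\partial S(F_n)/\partial x_{ij} = H_{ij}$, whereas the explicit occurrence of $x_i$ in the reconstruction formula contributes $\partial x_i / \partial x_{ij} = e_j$. Applying the product rule to $\hat{x}_i = t + P_k(x_i - t)$ then gives
\begin{align*}
    \frac{\partial \hat{x}_i}{\partial x_{ij}} = h_{ij} + \left( \frac{\partial P_k}{\partial x_{ij}} \right) \{ x_i - t(F_n) \} + P_k ( e_j - h_{ij} ),
\end{align*}
where the three summands record, respectively, the sensitivity of the centering, of the projection, and of the projected (and re-centered) observation.

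First I would dispose of the two terms not involving the derivative of the projection. Premultiplying by $e_j'$ and summing over $j$, the contribution of $h_{ij} - P_k h_{ij} = (I_p - P_k) h_{ij}$ is recast through the elementary identity $e_j' (I_p - P_k) h_{ij} = \mathrm{tr}\{ (I_p - P_k) h_{ij} e_j' \}$, producing the middle term of the claim. The leftover piece $\sum_{j=1}^p e_j' P_k e_j = \mathrm{tr}(P_k) = k$ supplies the leading constant, since $P_k$ is a rank-$k$ orthogonal projection. Both reductions are purely algebraic and require no further hypotheses.

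The substantial part is the computation of $\partial P_k / \partial x_{ij}$, and here Assumption~\ref{assu:eigenvalues} becomes essential. With all eigenvalues of $S(F_n)$ simple, each spectral projection $T_\ell$ (and hence $P_k = \sum_{\ell=1}^k T_\ell$) is a Fr\'echet-differentiable function of the symmetric matrix $S$ near $S(F_n)$, so that the chain rule combines this with $\partial S / \partial x_{ij} = H_{ij}$. The classical first-order perturbation formula for a simple eigenprojection under a symmetric perturbation $H$ reads $\mathrm{d} T_\ell = \sum_{m \neq \ell} (\eigenvalue_\ell - \eigenvalue_m)^{-1} ( T_\ell H T_m + T_m H T_\ell )$. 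Summing over $\ell = 1, \ldots, k$ and observing that every index pair with both $\ell, m \leq k$ contributes the $(\ell,m)$ and $(m,\ell)$ terms with opposite signs of $\eigenvalue_\ell - \eigenvalue_m$, the within-block contributions cancel and only the cross terms with $\ell \leq k < m$ persist. This leaves precisely $A_{ij}$, and premultiplying $A_{ij}\{x_i - t(F_n)\}$ by $e_j'$ and summing over $j$ yields the final term of the lemma.

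The hard part will be the rigorous justification of the eigenprojection perturbation formula, and in particular the differentiability of $P_k$ as a map on the space of symmetric matrices rather than merely along the single direction $H_{ij}$; this is exactly what Assumption~\ref{assu:eigenvalues} secures, since the simplicity of the eigenvalues guarantees the local real-analyticity of the spectral decomposition and hence legitimizes the chain rule. The remaining bookkeeping of the within-block cancellation, though elementary, must be carried out attentively to confirm that the double sum collapses to the cross-block form defining $A_{ij}$.
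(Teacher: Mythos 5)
Your proposal is correct and follows essentially the same route as the paper: both differentiate $\hat{x}_i = t + P_k(x_i - t)$ via the product rule into the centering, projection, and observation terms, and both obtain $\partial P_k/\partial x_{ij} = A_{ij}$ from the classical first-order perturbation of simple eigenprojections (the paper derives it from the eigenvector expansion in Stewart's book and then takes outer products, while you quote the eigenprojection formula directly), with the same within-block cancellation collapsing the double sum to the cross-block form. The only cosmetic difference is that the paper phrases everything as explicit $\varepsilon$-expansions of $t^*$, $S^*$, $T_\ell^*$, $P_k^*$ rather than as a chain rule, but the content is identical.
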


Lemma \ref{lem:reconstruction_derivative} essentially says that, as soon as one obtains expressions for the quantities $h_{ij}$ and $H_{ij}$ in Assumption~\ref{assu:derivatives} (and Assumption~\ref{assu:eigenvalues} holds) for some particular location scatter pair $(t, S)$, these can be plugged in to Lemma 3 to construct a version of the SURE-criterion $\hat{R}_{1k}$ that is based on $(t, S)$. In Lemma \ref{lem:mean_covariance} below we have provided, for completeness, these expressions for the standard mean-covariance pair. The resulting SURE-criterion $\hat{R}_{1k}$ is, naturally, the Gaussian SURE as described in Section \ref{sec:sure_pca}.

\begin{lemma}\label{lem:mean_covariance}
The mean vector and the covariance matrix satisfy Assumption \ref{assu:derivatives} with
\begin{align*}
    h_{ij} = \frac{1}{n} e_j, \quad H_{ij} = \frac{1}{n} e_j \{ x_i - t(F_n) \}' + \frac{1}{n} \{ x_i - t(F_n) \} e_j'.
\end{align*}
\end{lemma}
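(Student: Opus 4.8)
The plan is to verify Assumption~\ref{assu:derivatives} by direct computation, exploiting the explicit closed forms \eqref{eq:spatial_median_objective} of the mean and covariance functionals. Throughout, I would abbreviate the centered sample points by $c_\ell := x_\ell - t(F_n)$, so that $\sum_{\ell=1}^n c_\ell = 0$ and $S(F_n) = (1/n)\sum_{\ell=1}^n c_\ell c_\ell'$.

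The location part is immediate. Since the mean is linear in the sample, perturbing only the $i$th point by $\varepsilon e_j$ shifts the mean by exactly $(\varepsilon/n) e_j$, that is, $t(F_{n,i,j,\varepsilon}) = t(F_n) + (\varepsilon/n) e_j$. Hence the difference quotient $\varepsilon^{-1} \{ t(F_{n,i,j,\varepsilon}) - t(F_n) \}$ equals $(1/n) e_j$ identically in $\varepsilon$, so no genuine limit is required and $h_{ij} = (1/n) e_j$ as claimed.

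The covariance part is the substantive computation. Here one must track that perturbing the single observation $x_i$ affects the covariance matrix in two ways: directly, through the perturbed point $x_i + \varepsilon e_j$, and indirectly, through the shifted mean $t(F_n) + (\varepsilon/n) e_j$ used to center every point. Writing out $S(F_{n,i,j,\varepsilon})$ with both effects, the centered perturbed points become $c_\ell - (\varepsilon/n) e_j$ for $\ell \neq i$ and $c_i + ((n-1)\varepsilon/n) e_j$ for $\ell = i$. I would expand the resulting sum of outer products and collect powers of $\varepsilon$: the $O(1)$ term reproduces $n\, S(F_n)$, while the $O(\varepsilon^2)$ terms vanish after dividing by $\varepsilon$ and letting $\varepsilon \to 0$.

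The crux is the $O(\varepsilon)$ coefficient, and this is where the main (minor) obstacle lies. The linear term gathers the contributions $-(\varepsilon/n)(e_j c_\ell' + c_\ell e_j')$ from each $\ell \neq i$ together with $((n-1)\varepsilon/n)(e_j c_i' + c_i e_j')$ from $\ell = i$. Applying the identity $\sum_{\ell \neq i} c_\ell = -c_i$ (a consequence of the centered points summing to zero) collapses the first group to $(\varepsilon/n)(e_j c_i' + c_i e_j')$, which combines with the $\ell = i$ contribution to yield exactly $\varepsilon(e_j c_i' + c_i e_j')$. Dividing by $n \varepsilon$ and taking $\varepsilon \to 0$ then delivers $H_{ij} = (1/n)(e_j c_i' + c_i e_j')$, matching the stated expression, with symmetry of $H_{ij}$ manifest. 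The only delicate point is the bookkeeping of the two distinct effects of the perturbation on the centering, for which the zero-sum identity is the decisive simplification.
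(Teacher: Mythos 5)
Your proposal is correct and follows essentially the same route as the paper's proof: both compute the shifted mean exactly, expand the perturbed (re-centered) outer products, and use the fact that the centered points sum to zero to collapse the $\ell \neq i$ contributions onto $c_i$, yielding the stated $h_{ij}$ and $H_{ij}$. No gaps.
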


Despite not offering us anything new, Lemma \ref{lem:mean_covariance} also serves in its simplicity as a contrast to our next result, detailing the forms of $h_{ij}$ and $H_{ij}$ for the spatial median/SSCM-pair. What makes deriving these quantities more complicated, compared to the mean-covariance pair, is the fact that no analytical expression is available for the spatial median (instead, it is obtained as a minimizer of the objective function described in Section \ref{sec:example_functionals}).

\begin{lemma}\label{lem:sm_sscm}
    Assume (i) that the points $x_1, \ldots , x_n$ are not concentrated on a line in $\mathbb{R}^p$ and, (ii) that $t(F_n) \neq x_i$, for all $i = 1, \ldots , n$. Then the spatial median and the spatial sign covariance matrix satisfy Assumption \ref{assu:derivatives} with
    \begin{align*}
        h_{ij} &= G^{-1} A_i e_j, \\
        H_{ij} &= \frac{1}{n} A_i e_j \frac{y_i'}{\| y_i \|} + \frac{1}{n} \frac{y_i}{\| y_i \|} e_j' A_i
        - \frac{1}{n} \sum_{\ell=1}^n A_\ell G^{-1} A_i e_j \frac{y_\ell'}{\| y_\ell \|}
        - \frac{1}{n} \sum_{\ell=1}^n \frac{y_\ell}{\| y_\ell \|} e_j' A_i G^{-1} A_\ell, 
    \end{align*}
    where $A_i := w_i (I_p - y_i y_i'/\| y_i \|^2)$, $w_i := \| y_i \|^{-1}$, $y_i := x_i - t(F_n)$ and $G := \sum_{i = 1}^n A_i$.
\end{lemma}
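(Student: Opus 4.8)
The plan is to exploit the fact that the spatial median $t(F_n)$ is pinned down by a smooth estimating equation, differentiate that equation implicitly in the perturbation parameter $\varepsilon$ to get $h_{ij}$, and then obtain $H_{ij}$ from $h_{ij}$ by a direct product-rule differentiation of the SSCM. First I would record that, writing $y_k = x_k - t(F_n)$, the spatial median satisfies the first-order optimality condition $\sum_{k=1}^n u(x_k - t) = 0$ at $t = t(F_n)$. Condition (i) guarantees that $t(F_n)$ is the unique minimizer of the (strictly convex off the data) objective, while condition (ii) ensures $y_k \neq 0$ for every $k$, so that $u$ is differentiable at each $y_k$ with Jacobian exactly $A_k = w_k(I_p - y_k y_k'/\|y_k\|^2)$. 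For small $\varepsilon$ the perturbed median $t_\varepsilon := t(F_{n,i,j,\varepsilon})$ solves
\begin{align*}
    \sum_{k \neq i} u(x_k - t_\varepsilon) + u(x_i + \varepsilon e_j - t_\varepsilon) = 0,
\end{align*}
and since $x_i + \varepsilon e_j - t_\varepsilon$ stays bounded away from $0$, the left-hand side is a $C^1$ function of $(\varepsilon, t_\varepsilon)$ near $(0, t(F_n))$.

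Next I would apply the implicit function theorem to this system. Its Jacobian in $t$ at $\varepsilon = 0$ equals $-\sum_k A_k = -G$, so the argument hinges on $G$ being invertible; I would verify this from condition (i) by noting that $a' G a = \sum_k w_k \| (I_p - v_k v_k') a \|^2 = 0$, with $v_k := u(y_k)$, forces every sign $v_k$ to be collinear with $a$, which is precisely the excluded ``points on a line'' configuration. Differentiating the estimating equation in $\varepsilon$ at $0$, using $\tfrac{d}{d\varepsilon}(x_k - t_\varepsilon)|_0 = -\dot t$ for $k \neq i$ and $\tfrac{d}{d\varepsilon}(x_i + \varepsilon e_j - t_\varepsilon)|_0 = e_j - \dot t$, yields $-G \dot t + A_i e_j = 0$, whence $h_{ij} = \dot t = G^{-1} A_i e_j$, as claimed.

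Finally, for the SSCM I would differentiate $S(F_n) = \tfrac1n \sum_k u(y_k) u(y_k)'$ termwise. With $\dot v_k = A_k \tfrac{d}{d\varepsilon}(x_k - t_\varepsilon)|_0 = -A_k h_{ij} + \delta_{ki} A_i e_j$, collecting the contributions $\dot v_k v_k' + v_k \dot v_k'$ and substituting $h_{ij} = G^{-1} A_i e_j$, $v_k = y_k/\|y_k\|$, and the symmetry of each $A_k$ and of $G$ (so that $h_{ij}' = e_j' A_i G^{-1}$) splits the sum into exactly the four stated pieces: the $\delta_{ki}$ contribution, present only for $k = i$, gives the first two terms of $H_{ij}$, while the $-A_k h_{ij}$ contribution, summed over $k$ and relabelled $\ell$, gives the two trailing sums. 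This is a routine, if lengthy, rearrangement once $h_{ij}$ is in hand.

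The main obstacle is not the algebra but the analytic justification that $\varepsilon \mapsto t_\varepsilon$ is genuinely (two-sidedly) differentiable at $0$, so that the limits in Assumption~\ref{assu:derivatives} exist in the first place; this is exactly what the implicit function theorem delivers, provided $G$ is invertible and $u$ is smooth at every $y_k$. Conditions (i) and (ii) are precisely what is needed to secure these two ingredients, respectively, and I would emphasize that they enter the proof only through establishing differentiability, not through the closed-form expressions themselves.
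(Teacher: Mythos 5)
Your approach is the same as the paper's: implicit differentiation of the spatial-median estimating equation to get $h_{ij}$, followed by a product-rule differentiation of the SSCM to get $H_{ij}$. The algebra you outline is correct and reproduces the stated formulas (your chain-rule form $\dot v_k v_k' + v_k \dot v_k'$ with the Jacobian $A_k$ of $u$ is the same expansion the paper obtains by expanding the quotient $y_k^*(y_k^*)'/\|y_k^*\|^2$ directly), and your quadratic-form verification that $G$ is invertible under condition (i) is exactly the argument the paper gestures at.

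The one place where your argument is thinner than it needs to be is the assertion that ``for small $\varepsilon$ the perturbed median $t_\varepsilon$ solves'' the smooth estimating equation, together with the implicit identification of $t_\varepsilon$ with the branch produced by the implicit function theorem. The IFT only hands you \emph{some} differentiable root $b(\varepsilon)$ of $f(\varepsilon,\cdot)=0$; to conclude $b(\varepsilon)=t(F_{n,i,j,\varepsilon})$ you need (1) that $t(F_{n,i,j,\varepsilon})$ avoids all (perturbed) data points for small $\varepsilon$ --- otherwise the objective need not be differentiable there and the gradient condition need not hold at the median at all --- and (2) uniqueness of the relevant root. The paper settles (1) by showing $\varepsilon \mapsto t(F_{n,i,j,\varepsilon})$ is continuous near zero (the perturbed medians live in a fixed compact set, then Berge's maximum theorem), so that assumption (ii) propagates to small $\varepsilon$; it settles (2) by the Milasevic--Ducharme uniqueness of the spatial median combined with convexity, so that any stationary point of the perturbed objective is its unique minimizer. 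In particular, condition (i) is used twice --- once for the invertibility of $G$ and once for uniqueness of the median --- so your closing claim that (i) and (ii) enter ``only through establishing differentiability'' undersells their role. This is a fillable gap rather than a wrong turn, but it is precisely the part of the proof that requires genuine work beyond the formal computation.
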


Plugging $h_{ij}$ and $H_{ij}$ from Lemma \ref{lem:sm_sscm} to the derivatives in Lemma \ref{lem:reconstruction_derivative} and consequently to $\hat{R}_{1k}$ in \eqref{eq:two_forms_for_sure} now gives us yet another robust criterion for determining the signal dimension. We note that while the additional assumption~(ii) imposed in Lemma \ref{lem:sm_sscm} seems to be difficult to analyze theoretically, its validity is nevertheless simply checked in practice (and the assumption (i) is satisfied, in particular, almost surely when $F$ is absolutely continuous).

Mimicking the proof of Lemma \ref{lem:sm_sscm} it would next be possible to derive equivalent results also for our two remaining location-scatter pairs. However, we have decided not to do so and the reasons for this are two-fold: (i) Some preliminary computations (not shown here) show that these computations lead, as with the spatial median/SSCM-pair in Lemma \ref{lem:sm_sscm}, to analytically cumbersome expressions for $h_{ij}$ and $H_{ij}$, from which no real insight can be gained. (ii) Due to the complexity of the resulting expression (and the large number of nested summations involved), the practical usefulness of the extensions is questionable. Indeed, as our timing comparisons in Section \ref{sec:timing} demonstrate, the version of $\hat{R}_{1k}$ obtained based on Lemma~\ref{lem:sm_sscm} is several orders of magnitude inferior to $\hat{R}_{2k}$ and $\hat{R}_{3k}$ in computational speed, while at the same time offering no or only minuscule gains in accuracy. Some preliminary exploration reveals that this issue is still further magnified for Tyler's shape matrix. Hence, while these extensions would be technically possible to derive, we did not see any real practical value in doing so.

\section{Simulations}\label{sec:simulations}

In order to study the finite-sample properties of our proposed robust extensions of SURE, we conduct an array of  simulation studies. As competing methods we have chosen the following set of well-established estimators from the literature, see Section \ref{subsec:scope} for further details:

\begin{itemize}
    \item[(i)] The classical estimator based on an asymptotic test of subsphericity \citep{schott2006high,nordhausen2021asymptotic}. The R-package \texttt{ICtest} \citep{RICtest} includes two implementations of it, one based on the covariance matrix and one based on the H-R estimator, and we include both of them in the comparison.
    \item[(ii)] The same estimator as (i) but with the null distribution of the test estimated through bootstrap. This estimator can be based on any of the four scatter matrices described in Section \ref{sec:example_functionals} and we thus include all of them in the comparison. We used throughout the study $200$ bootstrap samples, the default value in the implementation in \texttt{ICtest} \citep{RICtest}.
    \item[(iii)] The ladle estimator of \citep{luo2016combining} which, too, can be based on any of the four scatter matrices. The estimator is based on resampling, for which we used the default value $200$ in the implementation in \texttt{ICtest} \citep{RICtest}.
    \item[(iv)] Our centered version of the SURE-estimator of \citep{ulfarsson2015selecting}.
\end{itemize}

The above four categories of estimators are denoted in the following as {Asymp}, {Boot}, {Ladle} and SURE, respectively, with the used scatter matrix given in parenthesis. E.g., Asymp(HR) denotes the asymptotic test based method using the H-R estimator. In addition, we distinguish three different versions of the SURE-estimator, SURE1, SURE2 and SURE3, referring to using the objective functions $\hat{R}_{1k}$, $\hat{R}_{2k}$ and $\hat{R}_{3k}$, respectively. We thus have a total of 19 methods to compare, and these have been summarized in Table \ref{tab:methods}. The final four columns of the table are related to the timing study in Section \ref{sec:timing}. 

\begin{table}[]
    \centering
    \begin{tabular}{l|c|c|c|c|c}
  \multirow{2}{*}{Method(Scatter matrix)} & \multirow{2}{*}{Is robust?} & \multicolumn{2}{c}{$n = 200$} & \multicolumn{2}{c}{$n = 400$} \\
    & & $p = 10$ & $p = 20$ & $p = 10$ & $p = 20$ \\
    \hline
        SURE1(Cov) & No & 0.00  & 0.00  & 0.00  & 0.00   \\
        SURE1(SSCM) & Yes & 0.09  & 1.88  & 0.15  & 2.71   \\
        SURE2(SSCM) & Yes & 0.01  & 0.01  & 0.05  & 0.01  \\
        SURE2(Tyler) & Yes & 0.02  & 0.01  & 0.02  & 0.03   \\
        SURE2(HR) & Yes & 0.06  & 0.08  & 0.08  & 0.12   \\
        SURE3(Cov) & No & 0.00  & 0.00  & 0.00  & 0.00   \\
        SURE3(SSCM) & Yes & 0.01  & 0.01  & 0.02  & 0.02   \\
        SURE3(Tyler) & Yes & 0.01  & 0.02  & 0.03  & 0.03   \\
        SURE3(HR) & Yes & 0.06  & 0.09  & 0.08  & 0.12   \\
        \hline
        Asymp(Cov) & No & 0.00  & 0.00  & 0.00  & 0.00   \\
        Asymp(HR) & Yes & 0.05  & 0.07  & 0.09  & 0.13  \\
        \hline
        Boot(Cov) & No & 0.22  & 0.31  & 0.45  & 1.43  \\
        Boot(SSCM) & Yes & 2.19  & 5.35  & 4.07  & 10.36   \\
        Boot(Tyler) & Yes & 2.89  & 7.30  & 4.84  & 12.75  \\
        Boot(HR) & Yes & 8.89  & 24.14 & 13.30  & 37.04  \\
        \hline
        Ladle(Cov) & No & 0.08  & 0.14 & 0.09  & 0.16  \\
        Ladle(SSCM) & Yes & 1.88 & 2.14  & 3.16  & 3.69  \\
        Ladle(Tyler) & Yes & 2.96 & 4.24  & 4.43  & 6.03  \\
        Ladle(HR) & Yes & 17.47 & 26.81  & 18.36  & 27.37 
    \end{tabular}
    \caption{The estimators included in the simulation study, along with binary indicator for their robustness. The final four columns give the average running times (in seconds) of the methods over 10 replicates in various settings.}
    \label{tab:methods}    
\end{table}

Throughout the simulations, we measure the performance of the methods through their total proportions of correctly estimated dimensions across all replicates. 


\subsection{Tail thickness}

\label{sec:simu1}
In the first simulation study, we explore how the methods perform under varying levels of heavy-tailedness. As a setting for this, we consider multivariate $t$-distributions with the degrees of freedom equal to $\nu  = 1,3,5,\ldots,25$. Thus, the heaviest tails are obtained in the case $\nu = 1$, corresponding to the multivariate Cauchy distribution. The simulation is repeated 100 times for every degree of freedom, and for every repetition a random sample consisting of $n=100$ observations is generated. In each case, we take the latent dimension to be $d=6$, whereas as the total dimensionality we use $p=10$. The error ``variance'' (i.e., the square of the final diagonal elements of $D$ in \eqref{eq:elliptical_model}) is always $\sigma^2=0.5$ and the signal ``variances'' (i.e., the squares of the first $d$ diagonal elements of $D$ in \eqref{eq:elliptical_model}) are randomly generated from the uniform distribution $\texttt{Unif(1,3)}$, independently for each of the 100 repetitions. The proportions of correctly estimated dimensions $d$ for each of the 19 methods are presented in Figure \ref{fig:simu1}.

\begin{figure}
\includegraphics[width=1.0\columnwidth]{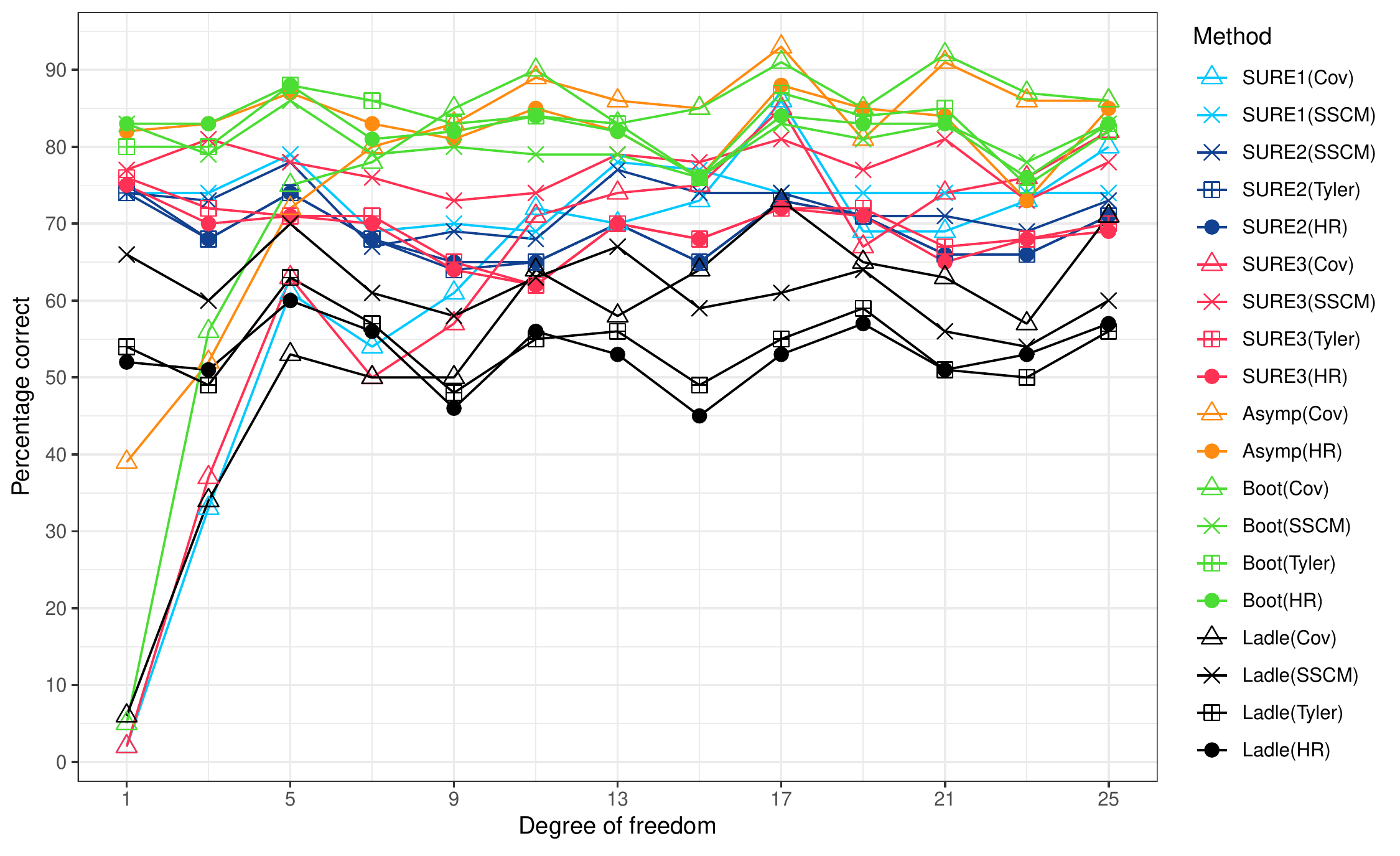}
\caption{Percentages of correctly estimated dimensions $d$ as a function of the degrees of freedom $\nu$ of the multivariate $t$-distribution in the tail thickness simulation. The sample size is $n = 100$ throughout.}
\label{fig:simu1}
\end{figure}

Unsurprisingly, the classical covariance matrix based methods fail to consistently find the correct latent dimension $d$ in the presence of too heavy tails (left side of the plot). This effect is most pronounced in the case $\nu = 1$ where the corresponding $t$-distribution does not possess the finite second-order moments required by the covariance estimation. The robust methods, on the other hand, do not suffer from this issue as they make no moment assumptions on the data generating distribution.

As the degrees of freedom increase, we observe that the covariance based methods start to outperform the robust alternatives. The reason for this is that, when $\nu \rightarrow \infty$, the multivariate $t$-distribution approaches the normal distribution for which the covariance based methods offer optimal inference. 

Comparing the methods by type (different colours in Figure \ref{fig:simu1}), the overall best performances are given by the robust bootstrap (Boot, green) and asymptotic (Asymp, orange) methods, with marginal differences between the two. Somewhat surprisingly, the computationally most expensive method, i.e., the ladle, has a relatively bad performance in this scenario. Comparing the different types of SURE to each other, it seems that the additional computational and theoretical complexity of SURE1 (cyan) does not provide additional benefits when compared to SURE2 (blue) and SURE3 (red), which both have a relatively similar performance, not falling much behind Boot and Asymp. 

\subsection{Latent dimension}\label{sec:simu2}

In our second simulation study, we investigate how the relative size of the underlying latent dimension $d$ affects the methods' performances. As the main selling point of the SURE-based methods is their light computational load, we have dropped the more computationally intensive methods (Boot, Ladle) from the comparison, focusing in this (and the following) study on comparing SURE only to its most relevant competitor, Asymp. We choose SURE2 to be the ``representative'' of the SURE-family as, based on the first simulation study, both SURE1 and SURE3 had performance similar to it. Thus, the families of methods included in the current simulation study are Asymp and SURE2.

Recall that SURE2 estimates the latent dimension as the index minimizing the corresponding objective function $k \mapsto \hat{R}_{2k}$. Based on our experiments, this strategy can sometimes be quite unstable, especially when the latent dimension is comparatively small. Thus, as an experimental alternative we propose estimating $d$ as the change point in the series of differences $\hat{R}_{2(k + 1)} -  \hat{R}_{2k}$. To understand the motivation for this, consider the following two typical forms for the graph formed by the points $(k, \hat{R}_{2k})$: (i) The points $(k, \hat{R}_{2k})$ form a V-shaped curve around $d$. In this case, the true dimension is both a minimizer and a location change point of the differences (the differences change sign at the true dimension). (ii) The graph $(k, \hat{R}_{2k})$ decreases linearly until $d$ and stays roughly constant afterwards. In this case, $d$ is a location change point of the differences but not necessarily a minimizer (it might happen that the minimizer occurs only after $d$). Thus, in these two (rather idealistic) examples, the experimental change point alternative offers more consistent detection of the dimension than the standard method of seeking the minimizer. We implemented the change point detection as binary segmentation through the function \texttt{cpt.meanvar} in the R-package \texttt{changepoint}~\cite{killick2014changepoint}. The resulting method is denoted in the sequel as ``SURE2 cp''.

We fix the total dimensionality to $p=100$ and let the latent dimension vary as $d=5,10,15,\ldots,95$. We consider two sample sizes $n = 1000, 2000$ and repeat the simulation 100 times for every combination of $d$ and $n$, generating in each repetition a random sample from the multivariate $t$-distribution with 1 degree of freedom. The error variance is fixed to $\sigma^2=0.5$ and the signal variances are randomly generated from the uniform distribution $\texttt{Unif(1,3)}$, independently in every repetition. The proportions of correctly estimated dimensions $d$ for the different procedures are presented in the two panels of Figure \ref{fig:simu2}, separately for $n = 1000$ and $n = 2000$.


\begin{figure}
\includegraphics[width=1.0\columnwidth]{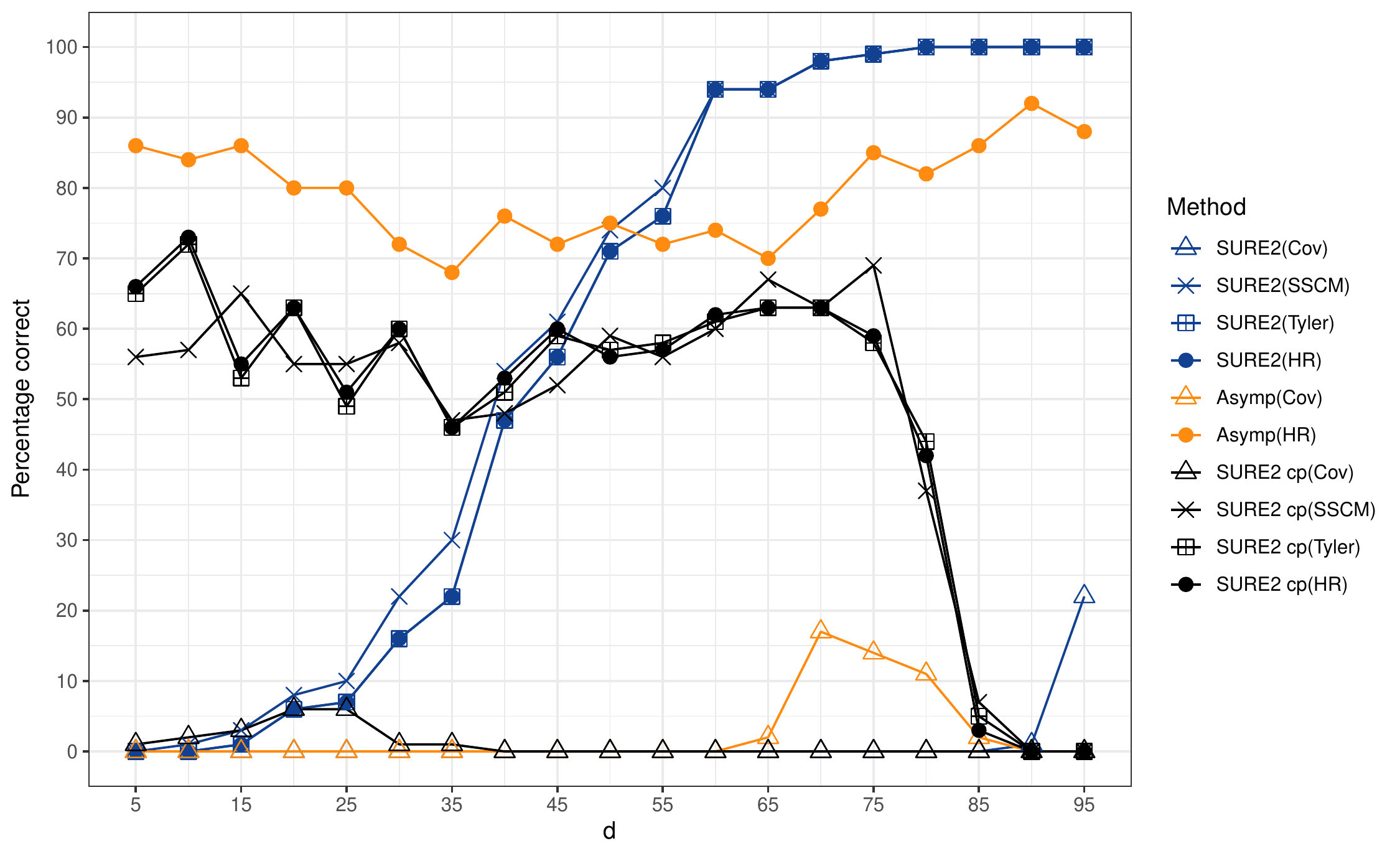}
\includegraphics[width=1.0\columnwidth]{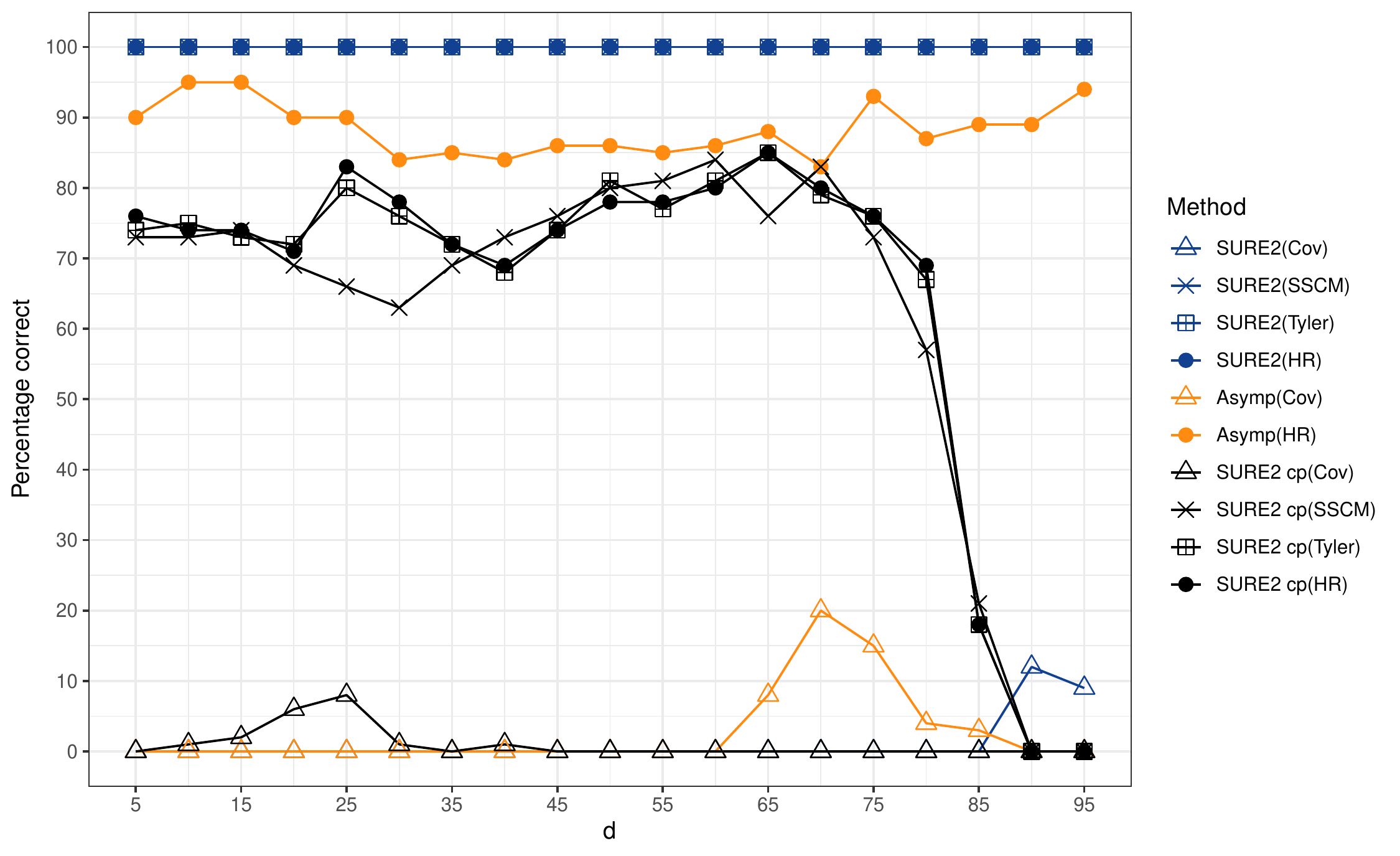}
\caption{Percentages of correctly estimated dimensions as a function of the underlying dimension $d$ when sample size is $n = 1000$ (top panel) or $n = 2000$ (bottom panel).}
\label{fig:simu2}
\end{figure}

The covariance based variants of Asymp, SURE2 and SURE2 cp have the expected (bad) performance, and in the following discussion we focus only on their robust versions. The top panel of Figure \ref{fig:simu2} reveals that, when $n=1000$, SURE2 starts producing good results only once the latent dimensionality $d$ is roughly half the data dimension $p$. However, the doubling of the sample size from $n = 1000$ to $n = 2000$ has a drastic effect on its performance and, under the latter, SURE2 estimates the true dimension correctly in each of the 100 repetitions (and for every value of $d$). SURE2 cp, on the other hand, has the opposite performance for $n = 1000$, giving the most accurate results when $d$ is relatively low. However, it does not benefit as much from the increased sample size as SURE2 does. Neither of the SURE-methods appears to be much affected by the actual choice of the robust scatter matrix (SSCM, Tyler or HR). Asymp is seen to give a consistently accurate behaviour under all considered settings and, as such, offers a reasonably safe choice in practice.



\subsection{Sample size}

In the third simulation, we study the effect of the sample size on the estimation accuracy, including again the same set of methods as in the previous study. The considered sample sizes are $n=500,750,1000,1500,2000,2500,5000$. The simulation is repeated 100 times for every sample size $n$, such that for every repetition a random sample of $n$ observations is generated from the multivariate $t$-distribution with 1 degree of freedom. We take the latent and the total dimensionalities to be $d=20$ and $p=100$, respectively, throughout the simulation. As the error variance we use $\sigma^2=0.5$ and the signal variances are again randomly generated from the uniform distribution $\texttt{Unif(1,3)}$, independently for each of the 100 replicates. The proportions of correctly estimated dimensions $d$ for the different procedures are presented in Figure \ref{fig:simu3}.

\begin{figure}
\includegraphics[width=1.0\columnwidth]{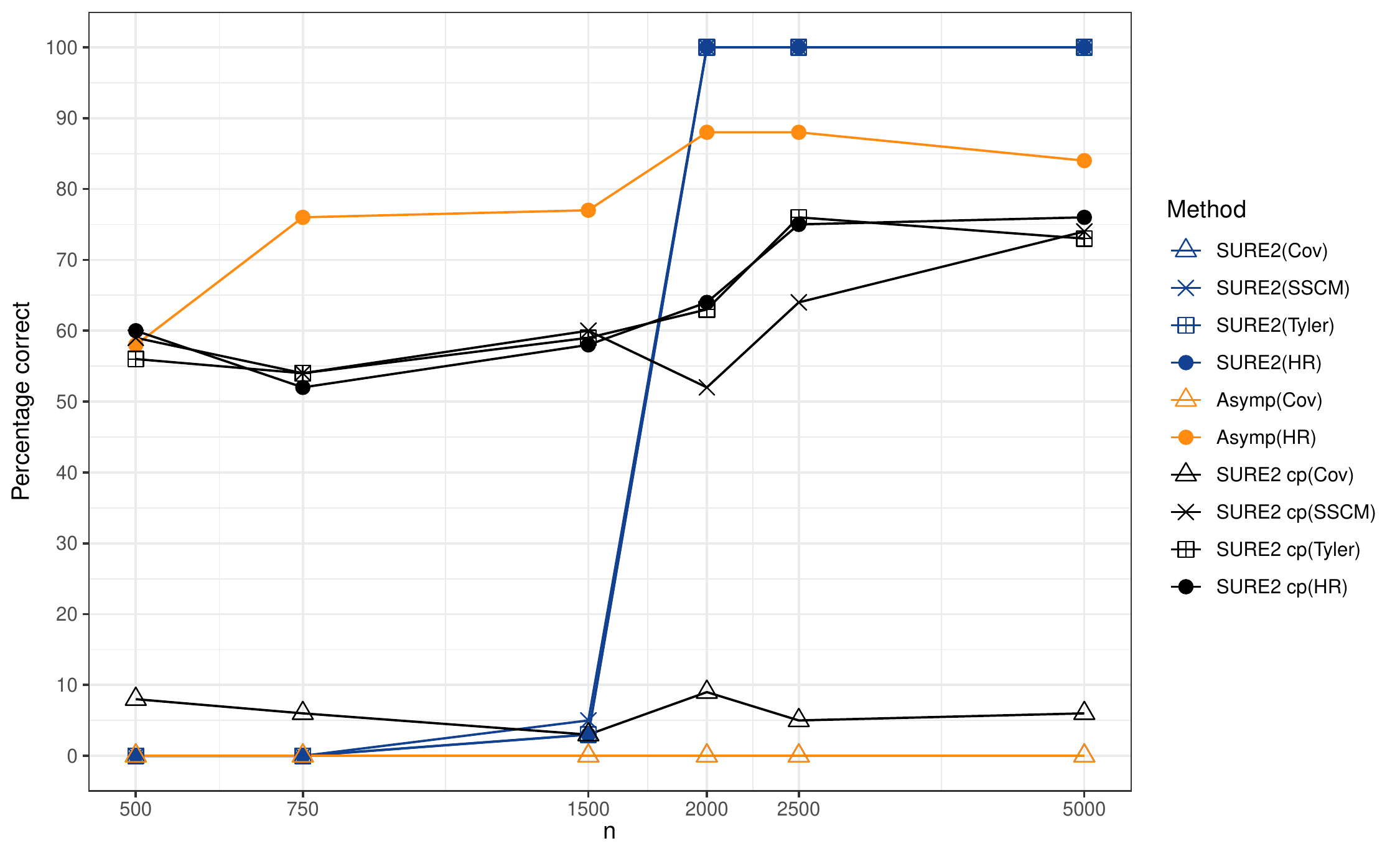}
\caption{Percentages of correctly estimated dimensions as a function of the sample size $n$. The scale of the horizontal axis is logarithmic.}
\label{fig:simu3}
\end{figure}

The most striking feature of Figure \ref{fig:simu3}, which also sheds some light on the behaviour of SURE2 in the previous simulations, is the sudden jump in the estimation accuracy of robust SURE2 from close to 0\% to 100\% at around $n = 1700$. Interestingly, the fact that SURE2 cp has more even behaviour across the different sample sizes indicates that this jump is not so much a consequence of the SURE criterion $\hat{R}_{2k}$ itself but of the way in which the dimension estimate is selected based on the criterion values (recall that SURE2 picks the minimizing value of $k$ and SURE2 cp uses a more complicated change point technique). We thus conclude that the standard technique of choosing simply the minimizing index of the SURE criterion is not optimal at smaller sample sizes (in this scenario), whereas it gets very effective when $n$ is large enough. This matter clearly warrants further investigation and, due to its complexity, we have left it for future research, see Section \ref{sec:discussion}. Finally, we also observe that, overall, the used scatter matrix seems to have very little effect on the results, apart from Cov, which again breaks down in the presence of a heavy-tailed distribution. 


\subsection{Computation time}\label{sec:timing}

As our final simulation study, we compare the running times of all 19 methods included in Table \ref{tab:methods}. The change point variant of SURE2 is not included as its computational difference to the base SURE2-method is marginal and negligible compared to the differences between the methods itself. We distinguish two different sample sizes $n = 200, 400$ and two different dimensionalities $p = 10, 20$, their combinations leading to a total of four different settings. For each setting, we take the data distribution to be the multivariate $t$-distribution with $\nu = 1$ degrees of freedom, $d = 0.6 p$ and the signal and noise variances as in the previous simulation. We run each of the 19 methods 10 times on each setting (using the same set of 10 data for each method) and record their computational times. The experiment was conducted on a desktop computer with AMD Ryzen 5 3600 6-core processor and 16 GB RAM. The average running times in seconds are given in the final four columns of Table \ref{tab:methods}.

From Table \ref{tab:methods} we make the following observations: (i) Computational complexity in the methods stems from two sources, the choice of the scatter matrix and bootstrap replications (as performed by Boot and Ladle), of which the latter has a significantly greater impact on the timing. (ii) The doubling of the sample size $n$ does very little to increase the computational times, whereas the doubling of the dimension $p$ serves to multiply the times by roughly 1.5. (iii) Of the robust methods, the fastest are by far SURE2, SURE3 and Asymp which all have computational times roughly of the same order of magnitude. SURE1 falls somewhere in between them and the more intensive Boot and Ladle.

Based on the observations made above and in the previous experiments, we summarize our simulation results as follows: The SURE-based robust methodology offers a fast alternative to the computationally heavy bootstrap-based methods, losing minimally to them in accuracy but at the same time retaining its usefulness also in the face of high-dimensional data sets. When compared to Asymp, SURE2 loses to it in accuracy when both $n$ and the latent dimension $d$ are small but quickly surpasses Asymp when $n$ is increased sufficiently, at least in the scenario we considered. Additionally, whereas Asymp is (for theoretical reasons) restricted to using the computationally heavy H-R estimator to achieve robustness, SURE2 does not have this limitation and we may replace the H-R estimator in SURE2 with, e.g., SSCM to obtain roughly a ten-fold gain in speed, based on Table \ref{tab:methods}, with almost identical accuracy. We thus conclude that the SURE-based robust methods offer fast and efficient estimators of the signal dimension in large-dimensional data-rich scenarios.

\section{Application: asset returns}\label{sec:real_data}

\begin{figure}
    \centering
    \includegraphics[width=1\textwidth]{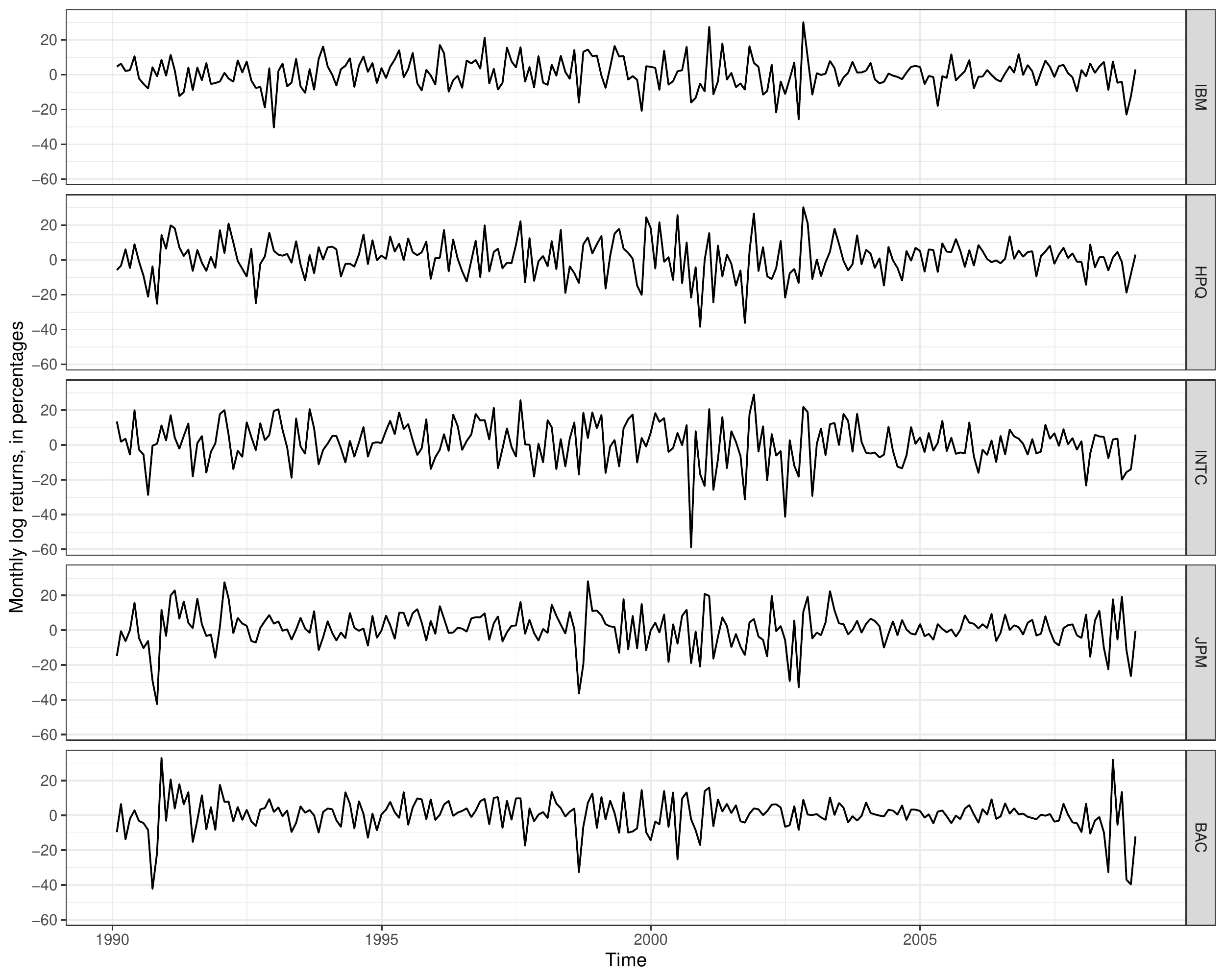}
    \caption{The monthly log returns (in percentages) of five stocks (IBM, HPQ, INTC, JPM, BAC) from January 1990 to December 2008.}
    \label{fig:tsay_original}
\end{figure}

We next illustrate the robust SURE methods in a financial data set that was used to demonstrate principal component analysis in the classical textbook \cite[Section 9]{tsay2010analysis} to search for common factors explaining (joint) asset return variability. The data itself is available on the author's (Ruey S. Tsay's) webpage and consists of monthly log stock returns (including dividends) of five stocks (IBM, HPQ, INTC, JPM, BAC) from January 1990 to December 2008.\footnote{See the website \texttt{https://faculty.chicagobooth.edu/ruey-s-tsay/research/analysis-\\of-financial-time-series-3rd-edition}. Specifically, stocks are International Business Machines (IBM), Hewlett-Packard (HPQ), Intel Corporation (INTC), J.P. Morgan Chase (JPM) and Bank of America (BAC).} These $p = 5$ time series of length $T = 228$ months are illustrated in Figure \ref{fig:tsay_original}. \cite{tsay2010analysis} computed the portmanteau test statistics and found that despite the time series nature of the data there is no substantial serial correlation in returns, and hence we also ignore the serial dependence in our analysis. According to the results of PCA, \cite{tsay2010analysis} concluded two common factors in his interpretation: The ``market component'' represents the general movement of the stock market and the ``industrial component'' represents the difference between the two industrial sectors, namely technology (IBM, HPQ and INTC) versus financial services (JPM and BAC). In addition, \cite{tsay2010analysis} points out that IBM stock ``has its own features that are worth further investigation''.

\begin{figure}
    \centering
    \includegraphics[width=1\textwidth]{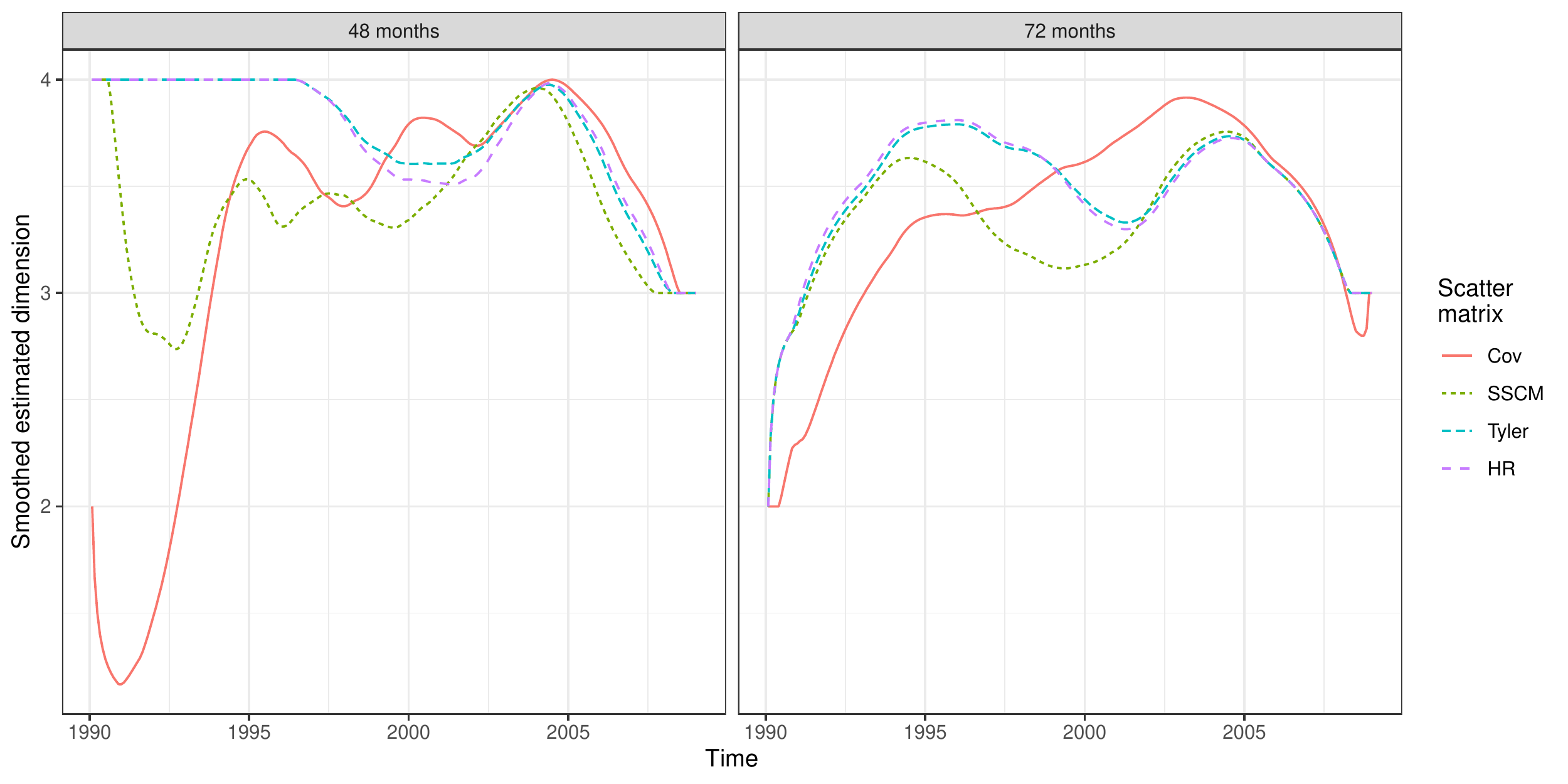}
    \caption{The smoothed curves of latent dimensionalities estimated using the window approach as described in the main text. The two panels correspond to the window lengths of $\ell = 48$ and $\ell = 72$ months, respectively.}
    \label{fig:tsay_dimensions}
\end{figure}

Instead of computing only a single estimate of the latent dimension for the data set, we take a local approach and run a window of length $\ell$ through the data. For each of the $T - \ell + 1$ windows we then estimate the latent dimension using one of our proposed methods. As the obtained dimensionalities correspond to the windows and not the actual observation months, we ``back-transform'' them as follows: For each individual month, we take the weighted average of the estimated dimensions of all windows in which that particular month is a member. We assign the weights such that the middle two observations in each window get a weight equal to 1 and the weights decrease linearly towards the window endpoints. This procedure thus produces a ``smoothed'' curve of estimated latent dimensions for the full observation period. To guarantee that the ellipticity of the data is at least partially fulfilled, we resort to rather small window lengths, taking either $\ell = 48$ or $\ell = 72$ in the following. Visual inspection (not shown here) reveals that scatter plots of the obtained windows indeed exhibit elliptical shapes throughout the observation period. The intuition behind this somewhat experimental approach is that the latent dimension $d$ can be seen to measure the internal complexity of the observed multivariate time series, allowing us to identify from the smoothed curve intervals of time when the stocks behave in a more unified manner (low dimension) or independently of each other (high dimension). Note that the months close to the beginning and the end of the measurement interval belong on average to a fewer number of windows, meaning that they are expected to show more erratic behavior.

For simplicity, we apply the described procedure only with SURE2, using each of the four scatter matrices (and the two window lengths), and the results are shown in Figure \ref{fig:tsay_dimensions}. In line with the evidence in \cite{tsay2010analysis}, we find at least two and in most time points at least three common features in our five stock case. Our robust approaches favour three (even four) dimensions, emphasizing also other common features than the market and industrial components. Interestingly, the time-varying patterns of the estimated dimensionality in the robust approaches seem to be largely in accordance with each other and following general market conditions where the major periods of decreasing prices (i.e. the beginning of 2000s and the financial crisis 2007--2009), and their onsets, are associated with somewhat lower dimensions. Finally, note that the fact that the curves for the non-robust ``Cov'' are markedly different from the robust ones is a clear indication that the data indeed exhibit heavy-tailed behaviour (as typically with asset returns) that hampers the estimation of the covariance matrix but leaves the robust estimates unaffected.

\section{Discussion}\label{sec:discussion}

The results obtained in this work open up several avenues for future research. Perhaps most notably, our simulations revealed that the standard approach of choosing the parameter estimate to be the global minimizer of the SURE-criterion might not be optimal in the current scenario. As a simple alternative to minimization, we explored using a change point detection based approach which indeed proved superior to the minimization in various settings (and vice versa in other settings). As such, the matter clearly warrants more investigation. We also note that the dangers of ``blindly'' minimizing a model fit criterion are naturally well-known in the model selection literature. Indeed, also \cite{ulfarsson2008dimension} mention this caveat, although, not in the context of SURE but the Bayesian information criterion. Despite this, we are not aware of any general alternatives to minimization being proposed in the literature, discounting visual inspection (which can be seen as both heuristic and subjective). Quite possibly, such procedures may not even exist as their behaviour would depend greatly on the functional form of the particular criterion in question. But, at least in the current scenario of dimension estimation, the change point criterion appears to provide a feasible option. 

A second point of interest brought up by our simulations is the sudden increase in the accuracy of the robust SURE2 in Figure \ref{fig:simu3}. In that particular data scenario there appears to be a ``critical'' sample size after which SURE2 achieves perfect estimation results. The dependence of this sample size on the model parameters, especially $p$ and $d$ is something to be studied and quantified. We note that any theoretical investigation of this matter is likely to be very difficult as it concerns the finite-sample properties of the method, whereas the large majority of the existing theoretical results in the robust literature are conducted in the asymptotic framework where $n \rightarrow \infty$.

Finally, despite being a significant improvement over the Gaussian assumption, the elliptical model \eqref{eq:elliptical_model} can still be seen as somewhat limiting in practice. In particular, a consequence of the ellipticity is that the tails of the distribution are equally heavy along all directions in the space $\mathbb{R}^p$. One solution would be to consider the independent component model $x_i = \mu + A z_i$ instead, where $A \in \mathbb{R}^{p \times p}$ is full rank and the components of the $p$-vector $z_i$ are mutually independent random variables \citep{comon2010handbook}. Exactly $p - d$ elements of $z_i$ are assumed to be Gaussian and, as such, noise, making the signal dimension of the model equal to $d$. By taking the signal components of $z_i$ to have different tail decays, a richer variety of heavy-tailed behaviours for $x_i$ is obtained, see \cite{virta2020latent}. The independent component model admits a solution through the use of \textit{pairs} of scatter functionals, see \cite{tyler2009invariant}, and a similar approach could possibly serve as a starting point for deriving a SURE-based criterion for the estimation of the dimension $d$ in the independent component model.

\section*{Acknowledgements}
The work of Niko Lietz\'en and Henri Nyberg was supported by the Academy of Finland (grant 321968). The work of Joni Virta was supported by the Academy of Finland (grant 335077). Nyberg also acknowledges financial support by the Emil Aaltonen Foundation (grant 180287).


\appendix

\section{Proofs of the technical results}\label{sec:proofs}

\begin{proof}[Proof of Lemma \ref{lem:sure_pca}]

We write, 
\begin{align*}
    & \mathrm{E} \| \hat{x}_i - V_0 y_i \|^2\\
    =& \mathrm{E} \| (\hat{x}_i - x_i) + (x_i - V_0 y_i) \|^2 \\
    =& \mathrm{E} \| \hat{x}_i - x_i \|^2 + 2 \mathrm{E} \{ (\hat{x}_i - x_i)'\varepsilon_i \} +  \mathrm{E} \| \varepsilon_i \|^2.
\end{align*}
Now, $ \| \hat{x}_i - x_i \|^2 = \mathrm{tr} \{ (I_p - P_k) (x_i - t_0) (x_i - t_0)' \} $, implying that averaging over $i$ in the preceding expansion yields
\begin{align*}
    & \frac{1}{n} \sum_{i=1}^n \mathrm{E} \| \hat{x}_i - V_0 y_i \|^2 = \mathrm{E} [ \mathrm{tr} \{ ( I_p - P_k ) S_0 \} ] + \frac{2}{n} \sum_{i=1}^n \mathrm{E} ( \hat{x}_i'\varepsilon_i ) - p \sigma^2.
\end{align*}
Hence, the claim follows as soon as we show that $\mathrm{E} ( \hat{x}_{ij} \varepsilon_{ij} ) =  \sigma^2 \mathrm{E} \{  ( \partial / \partial x_{ij} ) \hat{x}_{ij} \}$. To see this, we use the law of total expectation in conjunction with Stein's lemma to write,
\begin{align*}
    \mathrm{E} ( \hat{x}_{ij} \varepsilon_{ij} ) =& \mathrm{E} \{ \mathrm{E} ( \hat{x}_{ij} \varepsilon_{ij} \mid y_1, \ldots , y_n ) \} \\
    =& \sigma^2 \mathrm{E} [ \mathrm{E} \{ ( \partial / \partial \varepsilon_{ij} ) \hat{x}_{ij} \mid y_1, \ldots , y_n \} ] \\
    =& \sigma^2 \mathrm{E} [ \mathrm{E} \{ ( \partial / \partial x_{ij} ) \hat{x}_{ij} \mid y_1, \ldots , y_n \} ] \\
    =& \sigma^2 \mathrm{E} \{  ( \partial / \partial x_{ij} ) \hat{x}_{ij} \},
\end{align*}
where the second equality uses Stein's lemma on the reconstruction $\hat{x}_{ij}$ treated as a function of the full data (which are, conditional on $y_1, \ldots , y_n$, Gaussian), the third equality uses the fact that $x_{ij}$ and $\varepsilon_{ij}$ are equal up to translation by a constant (again, under the conditioning). 
\end{proof}

\begin{proof}[Proof of Lemma \ref{lem:reconstruction_derivative}]
We use, for brevity, the notation $t := t(F_n)$, $t^* := t( F_{n, i, j, \varepsilon} )$, and similarly for other quantities. Hence, by Assumption \ref{assu:derivatives}, we have $t^* = t + \varepsilon h_{ij} + o(\varepsilon)$ and $S^* = S + \varepsilon H_{ij} + o(\varepsilon)$. Thus, by \citep[Chapter 1.3.2]{stewart2001matrix}, the matrix $S^*$ has a $\ell$th eigenvector $u_\ell^*$ with the following first-order expansion,
\begin{align}\label{eq:eigenvector_perturbation}
    u_\ell^* = u_\ell + \varepsilon \sum_{m \neq \ell}^p \frac{1}{\eigenvalue_\ell - \eigenvalue_m} T_m H_{ij} u_\ell + o(\varepsilon),
\end{align}
where $u_\ell$ is the $\ell$th eigenvector of $S$ and $T_\ell = u_\ell u_\ell'$ is the corresponding orthogonal projection. Taking outer products on both side of \eqref{eq:eigenvector_perturbation} gives
\begin{align*}
    T_\ell^* = T_\ell + \varepsilon \sum_{m \neq \ell}^p \frac{1}{\eigenvalue_\ell - \eigenvalue_m} (T_\ell H_{ij} T_m + T_m H_{ij} T_\ell) + o(\varepsilon).
\end{align*}
Consequently, the projection $P^*_k$ onto the space spanned by the eigenvectors corresponding to the $k$ largest eigenvalues of $S^*$ (which are, for small enough $\varepsilon$, simple) is $P_k^* = P_k + \varepsilon A_{ij} + o(\varepsilon)$, where $A_{ij}$ is as in the statement of the lemma (note that, by symmetry of the $H_{ij}$, all terms with $ m \leq k $ get cancelled).

Now, recall that $x_i^* = x_i + \varepsilon e_j$ and write,
\begin{align*}
    \hat{x}_i^* - \hat{x}_i &= t^* - t + P_k^* (x_i^* - t^*) - P_k (x_i - t) = \varepsilon \{ h_{ij} + P_k (e_j - h_{ij}) + A_{ij} (x_i - t) \} + o(\varepsilon),
\end{align*}
showing that the desired derivative equals
\begin{align*}
    \frac{\partial}{\partial x_{ij}} \hat{x}_{ij} = e_j'h_{ij} + e_j' P_k (e_j - h_{ij}) + e_j' A_{ij} (x_i - t).
\end{align*}
Summation over $j$ now yields the claim.
\end{proof}

\begin{proof}[Proof of Lemma \ref{lem:mean_covariance}]
We write $t := t(F_n)$ and $t^* := t(F_{n, i, j, \varepsilon})$, and similarly for $S$ aad $S^*$. Clearly, $t^* = t + (1/n) \varepsilon e_j$, implying that $x_i + \varepsilon e_j - t^* = x_i - t + (1 - 1/n) \varepsilon e_j$. Writing $y_i := x_i - t$, we get,
\begin{align}\label{eq:covariance_expansion}
    S^* =& \frac{1}{n} \sum_{\ell \neq i}^n \{ y_\ell - (1/n) \varepsilon e_j \} \{ y_\ell - (1/n) \varepsilon e_j \}' + \frac{1}{n} \{ y_i + (1 - 1/n) \varepsilon e_j \} \{ y_i + (1 - 1/n) \varepsilon e_j \}'.
\end{align}
The first term of \eqref{eq:covariance_expansion} simplifies to
\begin{align*}
    S - \frac{1}{n} y_i y_i' + \frac{1}{n^2} \varepsilon e_j y_i' + \frac{1}{n^2} \varepsilon y_i e_j' + o(\varepsilon),
\end{align*}
whereas the second term can be written as
\begin{align*}
    \frac{1}{n} y_i y_i' + \frac{n - 1}{n^2} \varepsilon e_j y_i' + \frac{n - 1}{n^2} \varepsilon y_i e_j' + o(\varepsilon).
\end{align*}
Combining these two now yields the claim.
\end{proof}

\begin{proof}[Proof of Lemma \ref{lem:sm_sscm}]
We begin with $h_{ij}$. Let $g:\mathbb{R}^p \to \mathbb{R} $ be the objective function with $g(t) = (1/n) \sum_{i=1}^n \| x_i - t \|$, from whose minimization the spatial median $t(F_n)$ is found. It is straightforwardly checked that $g$ is convex and, since, by the assumption that $t(F_n) \neq x_i$ for all $i = 1, \ldots , n$, $g$ is differentiable in a neighbourhood of $t(F_n)$, the gradient of $g$ must vanish at $t(F_n)$,
\begin{align}\label{eq:gradient_condition}
    \frac{1}{n} \sum_{i = 1}^n \frac{x_i - t(F_n)}{\| x_i - t(F_n) \|} = 0.
\end{align}

Define now the function $f: \mathbb{R}^{p + 1} \to \mathbb{R}^p $ such that
\begin{align*}
    f(\varepsilon, t) = \frac{1}{n} \sum_{\ell \neq i}^n \frac{x_i - t}{\| x_i - t \|} + \frac{1}{n} \frac{x_i + \varepsilon e_j  - t}{\| x_i + \varepsilon e_j - t \|}.
\end{align*}
Now, $f\{ 0, t(F_n) \} = 0$ and, moreover, $f$ is continuously differentiable at $(0, t(F_n))$ and has the Jacobian $D_t f \{ 0, t(F_n) \} = - (1/n) G$ where $G$ is as defined in the statement of the lemma. Now, $G$ is a sum of weighted projection matrices to the orthogonal complements of the lines spanned by the vectors $y_i = x_i - t(F_n)$. Hence, $G$ has full rank (by our assumption that the points $x_1, \ldots , x_n$ are not concentrated on a line) and the implicit function theorem then guarantees that for a suitably small neighbourhood $\mathcal{S} \subset \mathbb{R}$ of zero there exists a differentiable function $b: \mathcal{S} \rightarrow \mathbb{R}^p$ such that $f\{ \varepsilon, b(\varepsilon ) \} = 0 $.

In the following, with some abuse of notation, $\mathcal{S}$ will denote a (changing) small enough neighbourhood of zero in $\mathbb{R}$. By our sample being not concentrated on a line and \citep{milasevic1987uniqueness}, the spatial median of $F_{n, i, j, \varepsilon}$ is unique in $\mathcal{S}$. Let $M := \max_i \| x_i \|$ and take any $t \in \mathbb{R}^p$ such that $\| t \| \geq 3 M + 1$. Then, $(1/n)\sum_{i = 1}^n \| x_i - t \| \geq 2M + 1$, whereas $(1/n)\sum_{i = 1}^n \| x_i - 0 \| \leq M$, showing that, for all $\varepsilon \in \mathcal{S}$, the spatial medians $t(F_{n, i, j, \varepsilon})$ reside in a compact set. Thus, by Berge's Maximum Theorem, the map $\varepsilon \mapsto t(F_{n, i, j, \varepsilon})$ is continuous in $\mathcal{S}$, implying that the equivalent of the assumption that $t(F_n) \neq x_i$, for all $i = 1, \ldots , n$, holds for $F_{n, i, j, \varepsilon}$. Consequently, for any $\varepsilon \in \mathcal{S}$, the corresponding spatial median objective function is differentiable and its gradient vanishes at the spatial median $t(F_{n, i, j, \varepsilon})$. Its gradient is equal to $f ( \varepsilon, t )$ and, by the uniqueness of the spatial medians $t(F_{n, i, j, \varepsilon})$, we thus have that the implicit function $b$ actually traces the spatial medians as a function of $\varepsilon$, i.e., $t(F_{n, i, j, \varepsilon}) = b(\varepsilon)$ for $\varepsilon \in \mathcal{S}$.

The differentiability of $b$ at zero now entails that we may write $t(F_{n, i, j, \varepsilon}) = b(\varepsilon) = t(F_n) + \varepsilon h_{ij} + o(\varepsilon)$ for some $h_{ij} \in \mathbb{R}^p$ as $ \varepsilon \rightarrow 0$. Plugging the expansion in to the $F_{n, i, j, \varepsilon}$-equivalent of \eqref{eq:gradient_condition}, we get
\begin{align}\label{eq:gradient_condition_perturbed}
\begin{split}
    0 = \sum_{\ell \neq i}^n \frac{y_\ell - \varepsilon h_{ij} + o(\varepsilon)}{\| y_\ell - \varepsilon h_{ij} + o(\varepsilon) \|} + \frac{y_i + \varepsilon (e_j - h_{ij}) + o(\varepsilon)}{\| y_i + \varepsilon (e_j - h_{ij}) + o(\varepsilon) \|},
\end{split}
\end{align}
where $ y_i = x_i - t(F_n) $. The first term in \eqref{eq:gradient_condition_perturbed} splits into two parts, the first of which simplifies as,
\begin{align*}
     \sum_{\ell \neq i}^n \frac{y_\ell}{\| y_\ell - \varepsilon h_{ij} + o(\varepsilon) \|} = \sum_{\ell \neq i}^n \frac{y_\ell}{\| y_\ell \|} + \varepsilon \sum_{\ell \neq i}^n \frac{y_\ell y_\ell' }{\| y_\ell \|^3} h_{ij} + o(\varepsilon),
\end{align*}
whereas, the second part writes,
\begin{align*}
    -  \sum_{\ell \neq i}^n \frac{\varepsilon h_{ij} + o(\varepsilon)}{\| y_\ell - \varepsilon h_{ij} + o(\varepsilon) \|} = - \varepsilon \sum_{\ell \neq i}^n \frac{1}{\| y_\ell \|} h_{ij} + o(\varepsilon).
\end{align*}
Simplifying the second term of \eqref{eq:gradient_condition_perturbed} similarly yields,
\begin{align*}
    & \frac{y_i + \varepsilon (e_j - h_{ij}) + o(\varepsilon)}{\| y_i + \varepsilon (e_j - h_{ij}) + o(\varepsilon) \|} = \frac{y_i}{\| y_i \|} - \varepsilon \frac{y_i y_i'}{\| y_i \|^3} (e_j - h_{ij}) + \varepsilon \frac{1}{\| y_i \|} (e_j - h_{ij}) + o(\varepsilon). 
\end{align*}
Plugging everything now back in to \eqref{eq:gradient_condition_perturbed}, and using \eqref{eq:gradient_condition}, we get, in the notation of the statement of the lemma,
\begin{align*}
    -\varepsilon G h_{ij} + \varepsilon A_i e_j + o(\varepsilon) = 0.
\end{align*}
Division by $\varepsilon$ and letting $\varepsilon \rightarrow 0$ then yields the desired expression for $h_{ij}$.

For $H_{ij}$, resorting to the same notation as in the proof of Lemma \ref{lem:reconstruction_derivative} and writing $y_i^* := x_i^* - t^*$, we first have,
\begin{align*}
    \| y_i^* \|^2 - \| y_i \|^2 = 2 \varepsilon (e_j - h_{ij})'y_i + o(\varepsilon), 
\end{align*}
and,
\begin{align*}
    y_i^* (y_i^*)' - y_i y_i' &= \varepsilon \{ (e_j - h_{ij}) y_i' + y_i(e_j - h_{ij})' \} + o(\varepsilon) =: \varepsilon B_{ij} + o(\varepsilon). 
\end{align*}
For $\ell \neq i$, we also write $y_\ell^* := x_\ell - t^*$ and have
\begin{align*}
     \| y_\ell^* \|^2 - \| y_\ell \|^2 = - 2 \varepsilon h_{ij}' y_\ell + o(\varepsilon),
\end{align*}
and
\begin{align*}
    y_\ell^* (y_\ell^*)' - y_\ell y_\ell' &= -\varepsilon \{  h_{ij} y_\ell' + y_\ell h_{ij}' \} + o(\varepsilon) =: -\varepsilon C_{\ell ij} + o(\varepsilon). 
\end{align*}
Hence, the partition
\begin{align*}
    & S^* - S = \frac{1}{n} \sum_{\ell \neq i}^n \left\{ \frac{y_\ell^* (y_\ell^*)'}{\| y_\ell^* \|^2} - \frac{y_\ell y_\ell'}{\| y_\ell \|^2}  \right\} + \frac{1}{n} \left\{ \frac{y_i^* (y_i^*)'}{\| y_i^* \|^2} - \frac{y_i y_i'}{\| y_i \|^2}  \right\},
\end{align*}
along with the formula for general matrices $A, B$ and scalars $a, b$ with $a \neq 0$ that,
\begin{align*}
    \frac{A + \varepsilon B + o(\varepsilon)}{a + \varepsilon b + o(\varepsilon)} = \frac{A}{a} +  \varepsilon \frac{a B - b A}{a^2} + o(\varepsilon),
\end{align*}
implies that
\begin{align*}
    S^* - S =& -\varepsilon \frac{1}{n} \sum_{\ell = 1}^n \{ \| y_\ell \|^{-2} (  h_{ij} y_\ell' + y_\ell h_{ij}' ) - 2 \| y_\ell \|^{-4} y_\ell h_{ij}' y_\ell y_\ell' \} \\
    +& \varepsilon \frac{1}{n} \{ \| y_i \|^{-2} (  e_{j} y_i' + y_i e_{j}' ) - 2 \| y_i \|^{-4} y_i e_{j}' y_i y_i' \} + o(\varepsilon).
\end{align*}
Simplifying the expression now yields the claim.
\end{proof}

\end{document}